\newtheorem{theorem}{Theorem}[section]
\newtheorem{remark}{Remark}[section]
\newtheorem{lemma}{Lemma}[section]
\newtheorem{algorithm}{Algorithm}[section]
\numberwithin{equation}{section} \numberwithin{table}{section}
\numberwithin{figure}{section}
\DeclareMathOperator*{\esssup}{ess\,sup}
\title[Error estimates]{Error estimates for structure-preserving discretization of the incompressible MHD system
\footnote{This material
    is based upon work supported in part by the US Department of
    Energy Office of Science, Office of Advanced Scientific Computing
    Research, Applied Mathematics program under Award Number
    DE-SC0014400 and by Beijing International Center for Mathematical
    Research of Peking University, China.}
}
\author{Yicong Ma}
\address{Department of Mathematics,The Pennsylvania State University, University Park, PA 16802, USA}
\email{yxm147@psu.edu}
\thanks{}
\author{Jinchao Xu}
\address{Department of Mathematics,The Pennsylvania State University, University Park, PA 16802, USA}
\email{jinchao@psu.edu}
\thanks{}
\author{Guodong Zhang}
\address{School of Mathematics and Statistics, Xi'an Jiaotong University, Xi'an 710049, China}
\email{gdzhang2014@gmail.com}
\thanks{}
\begin{document}
\maketitle

\begin{abstract}
In this paper, we carry out the error analysis for the structure-preserving discretization of the incompressible MHD system. This system, as a coupled system of Navier-Stokes equations and Maxwell's equations, is nonlinear. We use its energy estimate and the underlying physical structure to facilitate the error analysis. Under certain CFL conditions, we prove the optimal order of convergence. To support the theoretical results, we also present numerical tests.
\end{abstract}



\section{Introduction}
An incompressible magnetohydrodynamic (MHD) system is a coupled partial differential equation system
resulting from the incompressible Navier-Stokes equations and the (reduced)
Maxwell's equations. Assuming $\Omega \subset \mathbb{R}^{3}$ is a simply connected open-bounded domain with Lipschitz boundary, the model problem we consider is
\begin{align}
\begin{cases}
& \bm{u}_{t}
+ ( \bm{u} \cdot \nabla) \bm{u}
- R_{e}^{-1} \Delta \bm{u}
- s \bm{j} \times \bm{B}
+ \nabla p
= \bm{f} , \\
& \bm{B}_{t}
+ \nabla \times \bm{E} = \bm{0} , \\
& \bm{j} - R_{m}^{-1} \nabla \times \mu_{r}^{-1} \bm{B}
 = \bm{0} , \\
& \sigma_{r} ( \bm{E} + \bm{u} \times \bm{B} )
= \bm{j} , \\
& \nabla \cdot \bm{u} = 0.
\end{cases}\label{eq:dimensionless}
\end{align}
The coefficients in this system are the Reynolds number $R_{e}$, the magnetic
Reynolds number $R_{m}$, the coupling number $s$, the relative electric
conductivity $\sigma_{r}$, and the relative magnetic permeability
$\mu_{r}$. The initial conditions for this set of equations are
\begin{align*}
& \bm{u}(\bm{x}, 0) =
  \bm{u}_{0}(\bm{x}),
  \quad
\bm{B}(\bm{x}, 0) =
  \bm{B}_{0}(\bm{x}),
  \quad \forall \bm{x} \in \Omega,
\end{align*}
and the boundary conditions are
\begin{align*}
  & \bm{u} = \bm{0},
  \quad
  \bm{n} \times \bm{E} = \bm{0},
  \quad
  \bm{n} \cdot \bm{B} = 0,
  \quad \forall \bm{x} \in \partial \Omega, \quad t > 0.
\end{align*}

As discussed in the literature, the variables $\bm{u}$, $\bm{B}$ and $p$, once known, uniquely determine $\bm{E}$ and $\bm{j}$. There are many different numerical methods to discretize MHD. We now briefly examine some existing literature on some of the numerical methods and their error analysis for two types of MHD systems: the stationary MHD system \cite{Gunzburger.M;Meir.A;Peterson.J.1991a,Schotzau.D.2004a} and the evolutionary MHD system \cite{Hu.K;Ma.Y;Xu.J.2014a,Prohl.A.2008a,He.Y.2015a}.

For the stationary MHD system, Gunzburger, Meir and Peterson \cite{Gunzburger.M;Meir.A;Peterson.J.1991a} propose a formulation with $H^{1}$ finite element discretization for the magnetic field, and analyze its well-posedness and convergent behavior. Sch\"{o}tzau \cite{Schotzau.D.2004a}, who also works on the stationary MHD system, proposes a new formulation with $H(\mathrm{curl})$ discretization for the magnetic field, and proves its well-posedness and the optimal order of convergence. There are also many other methods for stationary problems, for example, \cite{Gerbeau.J.2000a,Guermond.J;Minev.P.2003a,Shadid.J;Pawlowski.R;Banks.J;Chacon.L;Lin.P;Tuminaro.R.2010a}.

For the evolutionary MHD system, Prohl \cite{Prohl.A.2008a} studies the coupled and decoupled schemes based on  $H(\mathrm{curl})$ conforming discretization of the magnetic field. He proves that the discrete solution converges to the weak solution under a strong Courant-Friedrichs-Lewy (CFL) condition; that is, $k \leq C h^{3}$ ($k$ stands for the time step size,  and $h$ for the mesh size). And He \cite{He.Y.2015a} studies the MHD system on a regular domain with $H^{1}$ conforming discretization of the magnetic field. He proves an unconditional optimal order of convergence.

In this paper, we study the convergence property of a structure-preserving discretization presented in \cite{Hu.K;Ma.Y;Xu.J.2014a}. This method is based on the mixed formulation \cite{Boffi.D;Brezzi.F;Fortin.M.2013a}, which comes from the idea of FEEC (finite element exterior calculus) \cite{Arnold.D;Falk.R;Winther.R.2006a,Arnold.D;Falk.R;Winther.R.2010a} and DEC (discrete exterior calculus) \cite{Bossavit.A.2005a}. $H(\mathrm{curl})$ and $H(\mathrm{div})$ conforming finite element discretization are used for the electric field and the magnetic field respectively. The advantage of this approach is that the important Gauss's law for magnetic field is preserved exactly on the discrete level. Moreover, the incompressible MHD system we focus on is a time-dependent nonlinear problem. Therefore, to conduct the error analysis, we work on an evolutionary nonlinear saddle point problem. Before approaching the detailed analysis, we briefly review the existing literatures for the error estimates of the (evolutionary) saddle point problems and nonlinear problems.

Abstract error estimates exist for standard (linear, non-evolutionary) saddle point systems \cite{Boffi.D;Brezzi.F;Fortin.M.2013a}. Optimal order of convergence is ensured by the well-posedness of the discretization system and the approximation property of the finite element space.
For the evolutionary saddle point problem, Boffi and Gastaldi \cite{Boffi.D;Gastaldi.L.2004a} build a general framework for the semi-discretization of the evolutionary (linear) saddle point problem and provide sufficient conditions for a good approximation in the natural functional spaces.

For nonlinear saddle point problems, no abstract error estimate framework can be found in the literature. But various techniques have been developed for specific problems. For example, Temann \cite{Teman.R.1977a} discusses the theory and numerical methods for NS equations. Heywood and Rannacher \cite{Heywood.J;Rannacher.R.1982a,Heywood.J;Rannacher.R.1986a,Heywood.J;Rannacher.R.1988a,Heywood.J;Rannacher.R.1990a} discuss the stability and error estimates of both semi-discretization and full discretization schemes for the NS systems. He \cite{He.Y.2008a} study linearized implicit-explicit schemes for this model.

General error estimates exist for nonlinear parabolic and elliptic problems. Thom\'{e}e et al. \cite{Schatz.A;Thomee.V;Wahlbin.L.1980a,Johnson.C;Larsson.S;Thomee.V;Wahlbin.L.1987a,Huang.M;Thomee.V.1981a,Huang.M;Thomee.V.1982a,Crouzeix.M;Thomee.V;Wahlbin.L.1989a,Thomee.V.1997a} investigate the error estimates of nonlinear parabolic problems intensively. Xu \cite{Xu.J.1996b} uses the priori $W^{1,\infty}$ estimate to derive the $W^{m,p}$ error estimates of a general nonlinear elliptic problem. Brezzi, Rappaz and Raviart \cite{Brezzi.F;Rappaz.J;Raviart.P.1980a,Brezzi.F;Rappaz.J;Raviart.P.1981a,Brezzi.F;Rappaz.J;Raviart.P.1982a} build an abstract theory for finite element approximation of nonlinear problems.



Due to the nonlinearity and the loss of coercivity of the MHD model, the error estimate
becomes difficult. 
To estimate the error of nonlinear problems, we usually need to prove that the $L^{\infty}$ norm (or a stronger norm) of the numerical solution is bounded. Generally, there are two ways to obtain this bound, one is using the mathematical induction method \cite{He.Y.2008a,He.Y;Sun.W.2007b}, the other is introducing a semi-discrete problem \cite{B.Li;W.Sun2013a,B.Li;W.Sun2013b}. Moreover, due to the loss of coercivity, we cannot use Cea's Lemma to derive the error estimates directly.  

In our analysis, we take advantage of the energy estimate of the structure-preserving discretization instead of estimating the $L^{\infty}$ norm of the numerical solution. We prove the unconditional error estimates for
the velocity $\bm{u}$, the magnetic field $\bm{B}$, and the volume
current density $\bm{j}$. And under certain constrains on 
the time-step size, we derive error estimates for the electric
field $\bm{E}$ and the pressure $p$.  Numerical tests support the theoretical results.

We organize this paper as follows. In \S \ref{sec:notation}, we
introduce useful notation for our analysis. In ~\S
\ref{sec:discretization}, we go over the discretization schemes and
their energy estimates. We carry out detailed error estimates in \S
\ref{sec:error_estimates}, and present numerical experiments to
demonstrate the optimal order of convergence in \S
\ref{sec:numer_tests}.

\section{Magnetohydrodynamics model}\label{sec:notation}
In this section, we introduce some notation which follows mostly
\cite{Hu.K;Ma.Y;Xu.J.2014a}. We first define the usual $L^{2}$ inner
product
\begin{align*}
& (u, v) = \int_{\Omega} u \cdot v dx,
\end{align*}
and the $L^{2}$ norm
\begin{align*}
& \Vert u \Vert = \left(\int_{\Omega} \lvert u \rvert^{2} dx \right)^{1/2}.
\end{align*}
For the sake of simplicity, we write both $L^{2}(\Omega)$ and $\left[ L^{2}(\Omega) \right]^{3}$ as $L^{2}(\Omega)$.

Given a linear operator $D$, we define
\begin{align*}
& H(D, \Omega) = \left\{ v \in L^{2}(\Omega),~ Dv \in L^{2}(\Omega) \right\},
\end{align*}
and
\begin{align*}
& H_{0}(D, \Omega) = \left\{ v \in H(D, \Omega), ~ t_{D} v = 0 ~ \mbox{on} ~ \partial \Omega \right\},
\end{align*}
where $t_{D}$ is the trace operator defined by
\begin{align*}
  & t_{D} v = \begin{cases}
    v, ~ D = \mathrm{grad}, \\
    n \times v, ~ D = \mathrm{curl}, \\
    n \cdot v, ~ D = \mathrm{div}.
    \end{cases}
\end{align*}
We define
\begin{align*}
& L_{0}^{2}(\Omega) := \left\{v \in L^{2}(\Omega), ~ \int_{\Omega} v = 0  \right\}.
\end{align*}
When $D = \mathrm{grad}$, we typically write $H^{1}(\Omega)$ instead of $H(\mathrm{grad}, \Omega)$, and $H_{0}^{1}(\Omega)$ instead of $H_{0}(\mathrm{grad}, \Omega)$. In the analysis, we also use the spaces $W^{m,p}$ and $H^{-1}$ with norms
\begin{align*}
  & \Vert v \Vert_{m,p} = \left(
  \sum \limits_{ 0 \leq \lvert \alpha \rvert \leq m }
  \int_{\Omega} \lvert D^{\alpha} v \rvert^{p} \right)^{1/p},
  \quad
  \Vert v \Vert_{0,\infty} = \esssup\limits_{x \in \Omega} \lvert v \rvert,
  \quad
  \Vert v \Vert_{-1} = \sup\limits_{\phi \in H_{0}^{1}(\Omega)} \frac{( v, \phi )}{\Vert \nabla \phi \Vert}.
\end{align*}
Another useful norm in the analysis is the discrete $L^{2}( [0,T], \ast )$ norm
\begin{align*}
& \VERT u \VERT_{m, \ast}^{2} = k \sum \limits_{n = 1}^{m} \Vert u^{n} \Vert_{\ast}^{2},
\end{align*}
where $k$ is the time step size (as is the $k$ in the following context). For example, $\VERT \cdot \VERT_{m,0}$ stands for the discrete $L^{2}([0, T], L^{2})$ norm and $\VERT \cdot \VERT_{m,-1}$ for the discrete $L^{2}( [0, T], H^{-1} )$ norm.

Next, we introduce some useful function spaces in the discretization.
\begin{align*}
  & \bm{X} = H_{0}^{1}(\Omega)^{3} \times H_{0}(\mathrm{div}, \Omega) \times H_{0}(\mathrm{curl}, \Omega),
  \quad
  Q = L_{0}^{2}(\Omega), \\
  & \bm{V} = H_{0}^{1}(\Omega)^{3},
  ~ \bm{V}^{d} = H_{0}(\mathrm{div}, \Omega),
  ~ \bm{V}^{c} = H_{0}(\mathrm{curl}, \Omega).
\end{align*}
We use $\bm{W}^{\ast}$ ($\bm{W} = \bm{V}$, $\bm{V}^{d}$, $\bm{V}^{c}$ or $Q$) to denote the dual space of $\bm{W}$, and $\bm{W}_{h}$ to denote the finite element space of $\bm{W}$. The divergence-free subspace of $\bm{V}$ is defined as
\begin{align*}
& \bm{V}^{0} = \left\{
\bm{v} \in \bm{V}, ~ \nabla \cdot \bm{v} = 0
\right\}.
\end{align*}

For the sake of simplicity, we assume that $\mu_{r} = \sigma_{r} = 1$ in the analysis. The Hilbert spaces $\bm{X}$ and $Q$ are equipped with norms $\Vert \cdot \Vert_{ \bm{X} }$ and $\Vert \cdot \Vert_{Q}$, which are defined as
\begin{align*}
& \Vert \bm{\xi} \Vert_{ \bm{X} }^{2}
= \Vert \bm{v} \Vert^{2}_{1}
+ \Vert \bm{C} \Vert^{2}_{ \mathrm{div} }
+ \Vert \bm{F} \Vert^{2}_{ \mathrm{curl} },
\quad
\forall \bm{\xi} = ( \bm{v}, \bm{C}, \bm{F} ) \in \bm{X}, \\
& \Vert q \Vert_{Q}^{2}
= \Vert q \Vert^{2},
\quad \forall q \in Q.
\end{align*}
Here,
\begin{align*}
& \Vert \bm{v} \Vert_{1}^{2} = \Vert \bm{v} \Vert^{2} + \Vert \nabla \bm{v} \Vert^{2},
~ \forall \bm{v} \in H^{1}(\Omega), \\
& \Vert \bm{C} \Vert_{\mathrm{div}}^{2}
= \Vert \bm{C} \Vert^{2}
+ \Vert \nabla \cdot \bm{C} \Vert^{2},
~ \forall \bm{C} \in H(\mathrm{div}, \Omega ),
\\
& \Vert \bm{F} \Vert_{\mathrm{curl}}^{2}
= \Vert \bm{F} \Vert^{2}
+ \Vert \nabla \times \bm{F} \Vert^{2},
~ \forall \bm{F} \in H(\mathrm{curl}, \Omega ) .
\end{align*}
We also use Sobolev space $H^{r}(\mathrm{div}, \Omega)$ and $H^{r}(\mathrm{curl}, \Omega)$, which are defined as
\begin{align*}
& H^{r}(\mathrm{div}, \Omega)
= \left\{
\bm{v} \in H^{r}(\Omega),
~ \nabla \cdot \bm{v} \in H^{r}(\Omega)
\right\},
\\
& H^{r}(\mathrm{curl}, \Omega)
= \left\{
\bm{v} \in H^{r}(\Omega),
~ \nabla \times \bm{v} \in H^{r}(\Omega)
\right\}.
\end{align*}
The corresponding norms are denoted by $\Vert \cdot \Vert_{r, \mathrm{div}}$ and $\Vert \cdot \Vert_{r, \mathrm{curl}}$, which are defined as
\begin{align*}
& \Vert \bm{C} \Vert_{r, \mathrm{div}}^2 = \Vert \bm{C}\Vert_{r,2}^2 +\Vert \nabla \cdot \bm{C}\Vert_{r, 2}^2,
\\
& \Vert \bm{F} \Vert_{r, \mathrm{curl}}^2 =\Vert \bm{F}\Vert_{r,2}^2 +\Vert \nabla \times \bm{F}\Vert_{r, 2}^2.
\end{align*}

To facilitate the analysis, we also introduce a tri-linear form of $\bm{V}$, namely,
\begin{align}
& c(\bm{\phi}, \bm{u}, \bm{v} )
= \dfrac{1}{2}
( \bm{\phi} \cdot \nabla \bm{u}, \bm{v} )
- \dfrac{1}{2} (\bm{\phi} \cdot \nabla \bm{v}, \bm{u} ).
\label{eq:convection_trilinear}
\end{align}

Based on the above notations, the variational formulation for system \eqref{eq:dimensionless} is: find $(\bm{u}, \bm{B}, \bm{E}) \in \bm{X}$ and $p \in Q$ such that for any $(\bm{v}, \bm{C}, \bm{F}) \in \bm{X}$ and $q \in Q$,
\begin{align}
  \begin{cases}
    &  \left( \bm{u}_{t},\bm{v} \right)
    + c( \bm{u}, \bm{u}, \bm{v} )
    + R_{e}^{-1} (\nabla \bm{u}, \nabla \bm{v})
    - s ( \bm{j} \times \bm{B},\bm{v} )
    - (p,\nabla\cdot \bm{v})
    = (\bm{f},\bm{v}), \\
    & \alpha \left( \bm{B}_{t}, \bm{C} \right)
    + \alpha ( \nabla\times \bm{E}, \bm{C}) = 0,   \\
    & s ( \bm{j},\bm{F})
    - \alpha ( \bm{B}, \nabla\times \bm{F})
    = 0,  \\
    & (\nabla\cdot \bm{u}, q)  =  0,
  \end{cases}\label{eq:variational_form}
\end{align}
where $\bm{j} = \bm{E} + \bm{u} \times \bm{B}$, and $\alpha = s/R_{m}$.

\section{Finite element discretization}\label{sec:discretization}
In this section, we briefly go over the finite element discretization of \eqref{eq:dimensionless}. For the temporal discretization, we use the backward Euler method. For the spacial discretization, we recall the formulation of both nonlinear and linearized discretization here. These discretization formulations are reasonable in the sense that they inherit the energy estimate from the continuous level. At the end of this section, we go over their energy estimates.

\begin{algorithm} \label{prob:picard_linearization}
  Find $ (\bm{u}^{n}_{h},
  \bm{B}^{n}_{h}, \bm{E}^{n}_{h}) \in \bm{X}_{h}$ and $ p^{n}_{h} \in Q_{h}$ such that for any $(\bm{v}_{h}, \bm{C}_{h}, \bm{F}_{h} ) \in \bm{X}_{h}$ and $q_{h} \in Q_{h}$,
  \begin{align}
    \begin{cases}
      & ( \bar{\partial} \bm{u}^{n}_{h},\bm{v}_{h} )
      + c( \bm{u}^{n-1}_{h}, \bm{u}^{n}_{h}, \bm{v}_{h} )
     + R_{e}^{-1} (\nabla \bm{u}^{n}_{h}, \nabla \bm{v}_{h})
     - s (\bm{j}^{n}_{h} \times \bm{B}^{n-1}_{h},\bm{v}_{h} ) \\
     & \qquad
     - (p^{n}_{h},\nabla\cdot \bm{v}_{h}) = (\bm{f}^{n},\bm{v}),\\
     & \alpha ( \bar{\partial} \bm{B}^{n}_{h}, \bm{C}_{h} )
     + \alpha (\nabla\times\bm{E}^{n}_{h}, \bm{C}_{h} ) = 0, \\
     & s(\bm{j}^{n}_{h},\bm{F}_{h}) - \alpha ( \bm{B}^{n}_{h},
     \nabla\times \bm{F}_{h}) =0, \\
     & (\nabla\cdot \bm{u}^{n}_{h}, q_{h}) = 0,
    \end{cases}\label{eq:picard_linearization}
  \end{align}
where $\bm{j}^{n}_{h} = \bm{E}^{n}_{h} + \bm{u}^{n}_{h} \times
\bm{B}^{n-1}_{h}$, $ \bar{\partial} \bm{u}_{h}^{n} = k^{-1} ( \bm{u}_{h}^{n} - \bm{u}_{h}^{n-1} ) $, and $ \bar{\partial} \bm{B}_{h}^{n} = k^{-1} ( \bm{B}_{h}^{n} - \bm{B}_{h}^{n-1} ) $.

\end{algorithm}

The above formulation uses linearization as a discretization scheme. In fact, we can discretize the nonlinear system directly and solve the nonlinear equation by Picard or Newton iteration.

\begin{algorithm} \label{prob:picard}
  Find $ (\bm{u}^{n}_{h},
  \bm{B}^{n}_{h}, \bm{E}^{n}_{h}) \in \bm{X}_{h}$ and $ p^{n}_{h} \in Q_{h}$ such that for any $(\bm{v}_{h}, \bm{C}_{h}, \bm{F}_{h} ) \in \bm{X}_{h}$ and $q_{h} \in Q_{h}$,
  \begin{align}
    \begin{cases}
      & ( \bar{\partial} \bm{u}^{n}_{h},\bm{v}_{h} )
      + c( \bm{u}^{n}_{h}, \bm{u}^{n}_{h}, \bm{v}_{h} )
     + R_{e}^{-1} (\nabla \bm{u}^{n}_{h}, \nabla \bm{v}_{h})
     - s (\bm{j}^{n}_{h} \times \bm{B}^{n}_{h},\bm{v}_{h} ) \\
     & \qquad
     - (p^{n}_{h},\nabla\cdot \bm{v}_{h}) = (\bm{f}^{n},\bm{v}),\\
     & \alpha ( \bar{\partial} \bm{B}^{n}_{h}, \bm{C}_{h} )
     + \alpha (\nabla\times\bm{E}^{n}_{h}, \bm{C}_{h} ) = 0, \\
     & s(\bm{j}^{n}_{h},\bm{F}_{h}) - \alpha ( \bm{B}^{n}_{h},
     \nabla\times \bm{F}_{h}) =0, \\
     & (\nabla\cdot \bm{u}^{n}_{h}, q_{h}) = 0,
    \end{cases}\label{eq:fully_picard}
  \end{align}
where $\bm{j}^{n}_{h} = \bm{E}^{n}_{h} + \bm{u}^{n}_{h} \times
\bm{B}^{n}_{h}$.

\end{algorithm}

As mentioned before, these above formulations admit desirable energy estimates. For the sake of completeness, we cite some of these estimates, which are established in \cite{Hu.K;Ma.Y;Xu.J.2014a}.

\begin{theorem}[Energy estimates]
For any $(\bm{u}_{h}^{n}, \bm{B}_{h}^{n}, \bm{E}_{h}^{n} ) \in \bm{X}_{h}$, and $p_{h}^{n} \in Q_{h}$ that satisfies \eqref{eq:fully_picard}, the following energy estimates hold
\begin{align*}
& \max \limits_{0 \leq n \leq m} \left( \Vert \bm{u}_{h}^{n} \Vert^{2}
+ \alpha \Vert \bm{B}_{h}^{n} \Vert^{2} \right)
+ R_{e}^{-1} \VERT \nabla \bm{u}_{h} \VERT_{m, 0}^{2}
+ 2s \VERT \bm{j}_{h} \VERT^{2}_{m, 0}
\leq
\Vert \bm{u}^{0} \Vert^{2}
+ \alpha \Vert \bm{B}^{0} \Vert^{2}
+ R_{e} \VERT \bm{f} \VERT^{2}_{m,-1},
\end{align*}
where $\bm{j}_{h}^{n} = \bm{E}^{n}_{h} + \bm{u}^{n}_{h} \times
\bm{B}^{n}_{h}$.
\end{theorem}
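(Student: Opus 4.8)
The plan is to reproduce, at the fully discrete level, the energy argument that underlies the continuous estimate, by testing \eqref{eq:fully_picard} against the discrete solution itself. Concretely, take $\bm{v}_{h} = \bm{u}_{h}^{n}$ in the momentum equation, $\bm{C}_{h} = \bm{B}_{h}^{n}$ in the second equation, $\bm{F}_{h} = \bm{E}_{h}^{n}$ in the third equation, and $q_{h} = p_{h}^{n}$ in the divergence equation. The routine terms are then disposed of by three standard facts: the skew-symmetry of $c(\cdot,\cdot,\cdot)$ in its last two arguments gives $c(\bm{u}_{h}^{n},\bm{u}_{h}^{n},\bm{u}_{h}^{n}) = 0$; the divergence equation with $q_{h} = p_{h}^{n}$ gives $(\nabla\cdot\bm{u}_{h}^{n}, p_{h}^{n}) = 0$, so the pressure term drops out; and the elementary identity $(\bar{\partial}\bm{w}^{n}, \bm{w}^{n}) = \tfrac{1}{2k}\bigl(\Vert\bm{w}^{n}\Vert^{2} - \Vert\bm{w}^{n-1}\Vert^{2} + \Vert\bm{w}^{n} - \bm{w}^{n-1}\Vert^{2}\bigr)$ turns both discrete time derivatives into telescoping energy differences plus nonnegative jump terms.

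The crux --- and the one place where the structure-preserving nature of the scheme matters --- is the cancellation of the electromagnetic coupling terms, leaving only the Joule dissipation $s\Vert\bm{j}_{h}^{n}\Vert^{2}$. Using the scalar triple-product identity $(\bm{j}_{h}^{n}\times\bm{B}_{h}^{n})\cdot\bm{u}_{h}^{n} = -\bm{j}_{h}^{n}\cdot(\bm{u}_{h}^{n}\times\bm{B}_{h}^{n})$ together with the defining relation $\bm{u}_{h}^{n}\times\bm{B}_{h}^{n} = \bm{j}_{h}^{n} - \bm{E}_{h}^{n}$, the Lorentz term becomes
\begin{align*}
-s(\bm{j}_{h}^{n}\times\bm{B}_{h}^{n}, \bm{u}_{h}^{n}) = s\Vert\bm{j}_{h}^{n}\Vert^{2} - s(\bm{j}_{h}^{n}, \bm{E}_{h}^{n}).
\end{align*}
The third equation of \eqref{eq:fully_picard} with $\bm{F}_{h} = \bm{E}_{h}^{n}$ identifies $s(\bm{j}_{h}^{n}, \bm{E}_{h}^{n}) = \alpha(\bm{B}_{h}^{n}, \nabla\times\bm{E}_{h}^{n})$, and the second equation with $\bm{C}_{h} = \bm{B}_{h}^{n}$ gives $\alpha(\nabla\times\bm{E}_{h}^{n}, \bm{B}_{h}^{n}) = -\alpha(\bar{\partial}\bm{B}_{h}^{n}, \bm{B}_{h}^{n})$. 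Since the discretization is $H(\mathrm{curl})$--$H(\mathrm{div})$ conforming, these pairings are exact --- no integration by parts, no spurious boundary contributions --- so the coupling terms combine into exactly $\alpha(\bar{\partial}\bm{B}_{h}^{n}, \bm{B}_{h}^{n})$. The momentum equation tested with $\bm{u}_{h}^{n}$ therefore collapses to
\begin{align*}
(\bar{\partial}\bm{u}_{h}^{n}, \bm{u}_{h}^{n}) + \alpha(\bar{\partial}\bm{B}_{h}^{n}, \bm{B}_{h}^{n}) + R_{e}^{-1}\Vert\nabla\bm{u}_{h}^{n}\Vert^{2} + s\Vert\bm{j}_{h}^{n}\Vert^{2} = (\bm{f}^{n}, \bm{u}_{h}^{n}).
\end{align*}

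To conclude, apply the difference identity to both time-derivative terms and multiply by $2k$; this produces $\Vert\bm{u}_{h}^{n}\Vert^{2} - \Vert\bm{u}_{h}^{n-1}\Vert^{2} + \alpha(\Vert\bm{B}_{h}^{n}\Vert^{2} - \Vert\bm{B}_{h}^{n-1}\Vert^{2})$, plus the nonnegative jumps $\Vert\bm{u}_{h}^{n} - \bm{u}_{h}^{n-1}\Vert^{2} + \alpha\Vert\bm{B}_{h}^{n} - \bm{B}_{h}^{n-1}\Vert^{2}$, plus $2kR_{e}^{-1}\Vert\nabla\bm{u}_{h}^{n}\Vert^{2} + 2ks\Vert\bm{j}_{h}^{n}\Vert^{2}$, all equal to $2k(\bm{f}^{n}, \bm{u}_{h}^{n})$. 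On the right I would use the definition of the $H^{-1}$ norm, $(\bm{f}^{n}, \bm{u}_{h}^{n}) \le \Vert\bm{f}^{n}\Vert_{-1}\Vert\nabla\bm{u}_{h}^{n}\Vert$ (legitimate since $\bm{u}_{h}^{n} \in \bm{V}_{h} \subset H_{0}^{1}(\Omega)^{3}$), followed by Young's inequality $2k\Vert\bm{f}^{n}\Vert_{-1}\Vert\nabla\bm{u}_{h}^{n}\Vert \le kR_{e}\Vert\bm{f}^{n}\Vert_{-1}^{2} + kR_{e}^{-1}\Vert\nabla\bm{u}_{h}^{n}\Vert^{2}$, which absorbs half the viscous term. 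Discarding the jump terms, summing over $n = 1, \dots, m$ so that the energy differences telescope, and using $\Vert\bm{u}_{h}^{0}\Vert \le \Vert\bm{u}^{0}\Vert$, $\Vert\bm{B}_{h}^{0}\Vert \le \Vert\bm{B}^{0}\Vert$ for the discrete initial data, yields the stated bound for the endpoint value $\Vert\bm{u}_{h}^{m}\Vert^{2} + \alpha\Vert\bm{B}_{h}^{m}\Vert^{2}$; since the same chain of inequalities holds verbatim with $m$ replaced by any $n \le m$ and the right-hand side $R_{e}\VERT\bm{f}\VERT_{m,-1}^{2}$ is nondecreasing in $m$, the maximum over $0 \le n \le m$ follows at once.

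The argument is not deep --- the scheme was designed precisely to inherit the continuous energy balance --- so the main thing to get right is the bookkeeping: carrying the factor $\alpha$ consistently through the magnetic quantities, checking the sign in the triple-product step, and fixing the Young's-inequality constant so that exactly $kR_{e}^{-1}\Vert\nabla\bm{u}_{h}^{n}\Vert^{2}$ is left over to cancel. The same computation applies almost verbatim to the linearized scheme \eqref{eq:picard_linearization}, with $\bm{B}_{h}^{n-1}$ in place of $\bm{B}_{h}^{n}$ both in the Lorentz term and in the definition of $\bm{j}_{h}^{n}$.
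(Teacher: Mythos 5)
Your argument is correct, and it is exactly the standard discrete energy argument this scheme was designed to admit: the paper does not prove this theorem itself but quotes it from \cite{Hu.K;Ma.Y;Xu.J.2014a}, and your testing choices $(\bm{u}_h^n,\bm{B}_h^n,\bm{E}_h^n,p_h^n)$ together with the cancellations $c(\bm{u}_h^n,\bm{u}_h^n,\bm{u}_h^n)=0$, $-s(\bm{j}_h^n\times\bm{B}_h^n,\bm{u}_h^n)=s\Vert\bm{j}_h^n\Vert^2-s(\bm{j}_h^n,\bm{E}_h^n)$ and the Faraday/Ohm pairing are precisely the mechanism behind that estimate. One remark on the last step: summation yields, for each $n\le m$, the energy at level $n$ plus the dissipation accumulated up to level $n$ bounded by the right-hand side, so the stated single inequality combining $\max_{0\le n\le m}$ with the dissipation up to $m$ should be read termwise (or with a harmless factor $2$); this is the same loose formulation as in the quoted theorem, and nothing in your derivation needs to change.
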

Here, $\bm{u}^{0}$ and $\bm{B}^{0}$ depend on the initial data. A similar energy estimate holds for~\eqref{eq:picard_linearization}.


\section{Error estimates}\label{sec:error_estimates}

Before starting the detailed analysis, first we recall Gronwall's inequality \cite{Heywood.J;Rannacher.R.1990a}, which is an important tool in our analysis.

\begin{theorem}[Gronwall's inequality \cite{Heywood.J;Rannacher.R.1990a}] \label{lemma:Gronwall_ineqn}
Let $k$, $B$, and $a_{i}$, $b_{i}$, $c_{i}$, $\gamma_{i}$, for integers $i \geq 0$, be non-negative numbers such that
\begin{align*}
& a_{n} + k \sum \limits_{ i = 0 }^{n} b_{i}
\leq
k \sum \limits_{i=0}^{n} \gamma_{i} a_{i}
+ k \sum \limits_{i=0}^{n} c_{i} + B,
\quad \forall ~ n \geq 0.
\end{align*}
Suppose that $k \gamma_{i} < 1$ (for all $i$), and set $\sigma_{i} = ( 1 - k \gamma_{i} )^{-1}$, then
\begin{align*}
& a_{n} + k \sum \limits_{i=0}^{n} b_{i}
\leq
\exp\left(
k \sum \limits_{i=0}^{n} \sigma_{i} \gamma_{i}
\right) \left(
k \sum \limits_{i=0}^{n} c_{i} + B
\right),
\quad \forall ~ n \geq 0.
\end{align*}
\end{theorem}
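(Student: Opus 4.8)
The plan is to prove the stated bound by induction on $n$, after first isolating the ``diagonal'' term $k\gamma_n a_n$ that the hypothesis places on its own right-hand side. Throughout I write $A_n = a_n + k\sum_{i=0}^n b_i$ for the quantity to be estimated, $G_n = k\sum_{i=0}^n c_i + B$ for the accumulated data (non-decreasing in $n$ since the $c_i$ are non-negative), and $P_n = \exp\big(k\sum_{i=0}^n \sigma_i\gamma_i\big)$ (also non-decreasing, since $\sigma_i\gamma_i\ge 0$). The assertion to be proved is then precisely $A_n \le P_n G_n$. Note at the outset that $a_i \le A_i$ for every $i$, because the $b_i$ are non-negative.

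First I would isolate the top term. Splitting off the index $i=n$ in the first sum of the hypothesis gives $A_n - k\gamma_n a_n \le k\sum_{i=0}^{n-1}\gamma_i a_i + G_n$. Since $0 \le 1-k\gamma_n \le 1$, one checks that $(1-k\gamma_n)A_n \le A_n - k\gamma_n a_n$ (the difference equals $k^2\gamma_n\sum_{i=0}^n b_i \ge 0$), so multiplying by $\sigma_n = (1-k\gamma_n)^{-1} > 0$ produces the working inequality, which I will call $(\star)$:
\begin{equation*}
A_n \le \sigma_n\Big(k\sum_{i=0}^{n-1}\gamma_i a_i + G_n\Big).
\end{equation*}
This is the step where the smallness assumption $k\gamma_i < 1$ is essential: it is exactly what lets the implicit term be absorbed and creates the factor $\sigma_n$.

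The main induction now establishes $A_n \le P_n G_n$. For $n=0$, $(\star)$ reads $A_0 \le \sigma_0 G_0$, and the elementary inequality $1+y \le e^y$ applied to $y = k\sigma_0\gamma_0 = k\gamma_0/(1-k\gamma_0)$ (for which $\sigma_0 = 1+y$) gives $\sigma_0 \le e^{k\sigma_0\gamma_0} = P_0$, hence $A_0 \le P_0 G_0$. For the step, I assume $A_i \le P_i G_i$ (so $a_i \le P_i G_i \le P_i G_n$ by monotonicity of $G$) for all $i \le n-1$; feeding this into $(\star)$ gives $A_n \le \sigma_n G_n Q_n$ with $Q_n := 1 + k\sum_{i=0}^{n-1}\gamma_i P_i$. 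It then suffices to prove the purely arithmetic bound $\sigma_n Q_n \le P_n$. I would do this by a short auxiliary induction: from $\sigma_{n-1}Q_{n-1}\le P_{n-1}$, equivalently $Q_{n-1}\le (1-k\gamma_{n-1})P_{n-1}$, the recursion $Q_n = Q_{n-1}+k\gamma_{n-1}P_{n-1}$ telescopes to $Q_n \le P_{n-1}$, and the same inequality $\sigma_n \le e^{k\sigma_n\gamma_n}$ then gives $\sigma_n Q_n \le \sigma_n P_{n-1}\le P_n$. Combining, $A_n \le G_n\,\sigma_n Q_n \le P_n G_n$, completing the induction.

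The main obstacle is the fully implicit nature of the estimate: the bound for $a_n$ reappears inside its own right-hand side through the $i=n$ summand, so a direct induction does not close. The isolation step together with the hypothesis $k\gamma_n<1$ is precisely what resolves this, and beyond the bookkeeping the only analytic ingredient is the convexity estimate $1+y\le e^y$, used to convert the product-type factor $\sigma_n$ into the exponential $P_n$. The one delicate point is making the telescoping of $Q_n$ align with $P_{n-1}$ so that the two inductions fit together.
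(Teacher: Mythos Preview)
Your argument is correct. The isolation step $(\star)$ is exactly what is needed to cope with the implicit $i=n$ term, and the double induction (main estimate plus the auxiliary bound $\sigma_n Q_n \le P_n$) is cleanly executed; the only analytic input beyond non-negativity and $k\gamma_i<1$ is $1+y\le e^y$, which you use appropriately.

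As for comparison with the paper: there is nothing to compare. The paper does not supply a proof of this theorem; it is quoted from Heywood--Rannacher (1990) as a standard tool and used without justification. Your write-up is essentially the classical proof of the discrete implicit Gronwall lemma and would serve as a self-contained replacement for that citation.
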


We choose $\bm{V}_{h}$ to be the $k_{1}+1$-th order polynomial space, $\bm{V}_{h}^{d}$ the $k_{2}$-th order Raviart-Thomas elements, $\bm{V}_{h}^{c}$ the $k_{3}$-th order N\'{e}d\'{e}lec element, and $Q_{h}$ the $k_{1}$-th order polynomial space.

Define $\bm{\xi} = ( \bm{u}, \bm{B}, \bm{j} )$, $\bm{\eta} = ( \bm{v}, \bm{C}, \bm{F} )$, and
\begin{align}
& a( \bm{\xi}, \bm{\xi}, \bm{\eta} )
= c( \bm{u}, \bm{u}, \bm{v} )
    + R_{e}^{-1} (\nabla \bm{u}, \nabla \bm{v})
    - s ( \bm{j} \times \bm{B},\bm{v} )
    + \alpha \left( \nabla\times ( \bm{j} - \bm{u} \times \bm{B} ), \bm{C} \right)
    \nonumber \\
    &\qquad\qquad
    + s ( \bm{j},\bm{F})
    - \alpha ( \bm{B}, \nabla\times \bm{F}),
    \quad \forall \bm{\xi}, ~ \bm{\eta} \in \bm{X},
    \label{eq:mhd_forma} \\
& b( \bm{\eta}, q ) = -( \nabla \cdot \bm{v}, q ),
\quad \forall \bm{\eta} \in \bm{X}, ~ q \in Q.
 \label{eq:mhd_formb}
\end{align}
Therefore, we can write the MHD system \eqref{eq:variational_form} in the form of a saddle point problem. That is, find $(\bm{\xi}, p) \in \bm{X} \times Q$ such that for any $(\bm{\eta}, q) \in \bm{X} \times Q$,
\begin{align}
\begin{cases}
& ( A \bm{\xi}_{t}, \bm{\eta} ) + a( \bm{\xi}, \bm{\xi}, \bm{\eta} ) + b( \bm{\eta}, p )
= \langle \bm{h},  \bm{\eta} \rangle, \\
& b( \bm{\xi}, q ) = 0,
\end{cases}
\label{eq:mhd_variational}
\end{align}
where $A = \mathrm{diag}(1, \alpha, 0)$, and $\bm{h} = ( \bm{f}, \bm{0}, \bm{0} )$. Additionally, we can write \eqref{eq:fully_picard} as: find $( \bm{\xi}_{h}^{n}, p_{h}^{n}  ) \in \bm{X}_{h} \times Q_{h}$ such that for any $( \bm{\eta}_{h}, q_{h} ) \in \bm{X}_{h} \times Q_{h}$,
\begin{align}
\begin{cases}
& ( A \bar{\partial}  \bm{\xi}_{h}^{n}, \bm{\eta}_{h} )
+ a( \bm{\xi}_{h}^{n}, { \bm{\xi} }_{h}^{n}, \bm{\eta}_{h} )
+ b( \bm{\eta}_{h}, { p }_{h}^{n} )
= \langle { \bm{h} }^{n}, \bm{\eta}_{h} \rangle, \\
& b( { \bm{\xi} }_{h}^{n}, q_{h} )
= 0.
\end{cases}
\label{eq:mhd_fem}
\end{align}

Before giving the detailed error estimates, we define the projections of $( \bm{\xi}^{n}, p^{n} )$ first. Assume that $( \widehat{ \bm{u} }_{h}^{n}, \widehat{ p }_{h}^{n} ) \in \bm{V}_{h} \times Q_{h}$ is the Stokes projection of $( \bm{u}^{n}, p^{n} ) \in \bm{V}^{0} \times Q$. That is, for any given $( \bm{u}^{n}, p^{n} ) \in \bm{V}^{0} \times Q$, we define $( \widehat{ \bm{u} }_{h}^{n}, \widehat{p}_{h}^{n} ) \in \bm{V}_{h} \times Q_{h}$ such that $( \rho_{ \bm{u} }^{n}, \rho_{p}^{n} )$ satisfies
\begin{align*}
\begin{cases}
& R_{e}^{-1} ( \nabla \rho_{ \bm{u} }^{n} , \nabla \bm{v}_{h} )
- ( \rho_{p}^{n} , \nabla \cdot \bm{v}_{h} )
= 0,
\quad \forall \bm{v}_{h} \in \bm{V}_{h}, \\
& ( \nabla \cdot \rho_{ \bm{u} }^{n}, q_{h} ) = 0,
\quad \forall q_{h} \in Q_{h}.
\end{cases}
\end{align*}
We choose $\widehat{ \bm{B} }_{h}^{n} \in \bm{V}_{h}^{d}$ as the $L^{2}$ projection of $\bm{B}^{n}$, and $\widehat{ \bm{E} }_{h}^{n} \in \bm{V}_{h}^{c}$ as the canonical interpolation of $\bm{E}^{n}$.

Define
\begin{align*}
& e_{ \bm{w} }^{n} = \bm{w}^{n} - \bm{w}_{h}^{n},
\quad \rho_{ \bm{w} }^{n} = \bm{w}^{n} - \widehat{ \bm{w} }_{ h }^{n},
\quad \theta_{ \bm{w} }^{n} = \widehat{ \bm{w} }_{ h }^{n} - \bm{w}_{h}^{n},
\quad
\bm{w} = \bm{u}, ~ \bm{B}, ~ \bm{E}, ~ p, ~\bm{j}.
\end{align*}
By the definitions of $\bm{j}^{n}$ and $\bm{j}_{h}^{n}$, we have
\begin{align*}
e_{\bm{j}}^{n}
& = \bm{E}^{n} + \bm{u}^{n} \times \bm{B}^{n}
- \left(
\bm{E}_{h}^{n} + \bm{u}_{h}^{n} \times \bm{B}_{h}^{n}
\right)
\\
& = e_{ \bm{E} }^{n} + \bm{u}^{n} \times \bm{B}^{n}
- \bm{u}_{h}^{n} \times \bm{B}_{h}^{n}
\\
& = e_{ \bm{E} }^{n} + \bm{u}^{n} \times e_{ \bm{B} }^{n}
+ e_{ \bm{u} }^{n} \times \bm{B}_{h}^{n}.
\end{align*}
Similarly, we define
\begin{align*}
&  \rho_{ \bm{j} }^{n}
= \rho_{ \bm{E} }^{n} + \bm{u}^{n} \times \rho_{ \bm{B} }^{n} + \rho_{ \bm{u} }^{n} \times \bm{B}_{h}^{n},
\quad
\theta_{ \bm{j} }^{n}
= \theta_{ \bm{E} }^{n} + \bm{u}^{n} \times \theta_{ \bm{B} }^{n} + \theta_{ \bm{u} }^{n} \times \bm{B}_{h}^{n}.
\end{align*}
It follows that $e_{ \bm{j} }^{n} = \rho_{\bm{j} }^{n} + \theta_{ \bm{j} }^{n}$. Moreover, we define
\begin{align*}
& e_{ \bm{\xi} }^{n} = \begin{pmatrix}
e_{ \bm{u} }^{n}, e_{ \bm{B} }^{n}, e_{ \bm{j} }^{n}
\end{pmatrix},
\quad
\rho_{ \bm{\xi} }^{n} = \begin{pmatrix}
\rho_{ \bm{u} }^{n}, \rho_{ \bm{B} }^{n}, \rho_{ \bm{j} }^{n}
\end{pmatrix},
\quad
\theta_{ \bm{\xi} }^{n} = \begin{pmatrix}
\theta_{ \bm{u} }^{n}, \theta_{ \bm{B} }^{n}, \theta_{ \bm{j} }^{n}
\end{pmatrix}.
\end{align*}
For simplicity, we denote
\begin{align*}
&\Vert\rho_{\bm \xi}^n\Vert_{L^2\times L^2 \times \mathrm{curl} }^2=\Vert \rho_{\bm u}^n\Vert^2 + \Vert\rho_{\bm B}^n \Vert^2 + \Vert \rho_{\bm E}^n\Vert_{\mathrm{curl}}^2,
\\
&\Vert\rho_{\bm \xi}^n\Vert_{H^1\times L^2 \times \mathrm{curl} }^2=\Vert \nabla\rho_{\bm u}^n\Vert^2 + \Vert\rho_{\bm B}^n \Vert^2 + \Vert \rho_{\bm E}^n\Vert_{\mathrm{curl}}^2,
\\
&\Vert A_1 (\bar{\partial}\bm{\xi}^n-\bm{\xi}_t^n)\Vert^2=\Vert \bar{\partial}\bm{u}^n-\bm{u}_t^n\Vert^2 + \Vert \bar{\partial}\bm{B}^n-\bm{B}_t^n\Vert^2,
\\
&\Vert A_1 \bar{\partial}\rho_{\bm{\xi}}^n\Vert^2=\Vert \bar{\partial}\rho_{\bm{u}}^n\Vert^2 + \Vert \bar{\partial}\rho_{\bm{B}}^n\Vert^2,
\\
&\rho_0= \max_{1\leq n\leq N}\left\{
\Vert A_1 (\bar{\partial}\bm{\xi}^n-\bm{\xi}_t^n)\Vert^2
+ \Vert A_1 \bar{\partial} \rho_{\bm{\xi}}^{n} \Vert^{2}
+ \Vert \rho_{\bm{\xi}}^{n} \Vert_{L^2\times L^2 \times \mathrm{curl}}^{2}
\right\},
\\
&\rho_1= \max_{1\leq n\leq N}\left\{
\Vert A_1 (\bar{\partial}\bm{\xi}^n-\bm{\xi}_t^n)\Vert^2
+ \Vert A_1 \bar{\partial} \rho_{\bm{\xi}}^{n} \Vert^{2}
+ \Vert \rho_{\bm{\xi}}^{n} \Vert_{H^1\times L^2 \times \mathrm{curl}}^{2}
\right\},
\end{align*}
where $A_1=diag(1,1,0).$
Noticing that
\begin{align*}
& c( { \bm{u} }^{n}, { \bm{u} }^{n}, \bm{v}_{h} )
- c ( { \bm{u} }^{n}_{h}, { \bm{u} }^{n}_{h}, \bm{v}_{h} )
=
c ( e_{ \bm{u} }^{n}, { \bm{u} }^{n}, \bm{v}_{h} )
+ c( { \bm{u} }^{n}_{h}, e_{ \bm{u} }^{n}, \bm{v}_{h} ),
\\
& ( { \bm{j} }^{n} \times { \bm{B} }^{n}, \bm{v}_{h} )
- ( { \bm{j} }^{n}_{h} \times { \bm{B} }^{n}_{h}, \bm{v}_{h} )
=
( { \bm{j} }^{n} \times e_{ \bm{B} }^{n}, \bm{v}_{h} )
+ ( e_{ \bm{j} }^{n} \times { \bm{B} }^{n}_{h}, \bm{v}_{h} ),
\end{align*}
we can rewrite the error equation as
\begin{align}
\begin{cases}
& ( A \bar{\partial} \theta_{ \bm{\xi} }^{n}, \bm{\eta}_{h} )
+ \widehat{a}( \bm{\xi}^{n}, \bm{\xi}_{h}^{n}, \theta_{ \bm{\xi} }^{n}, \bm{\eta}_{h} )
+ b( \bm{\eta}_{h}, \theta_{p}^{n} )
=
( A \bar{\partial} \bm{\xi}^{n} - A \bm{\xi}_{t}^{n}, \bm{\eta}_{h} )
\\
& \qquad
- ( A \bar{\partial} \rho_{ \bm{\xi} }^{n}, \bm{\eta}_{h} )
- \widehat{a}( \bm{\xi}^{n}, \bm{\xi}_{h}^{n}, \rho_{ \bm{\xi} }^{n}, \bm{\eta}_{h} )
- b( \bm{\eta}_{h}, \rho_{p}^{n} ),
\quad
\forall \bm{\eta}_{h} \in \bm{X}_{h},
 \\
& b ( \theta_{ \bm{\xi} }^{n}, q_{h} )
= -b ( \rho_{ \bm{\xi} }^{n}, q_{h} ),
\quad
\forall q_{h} \in Q_{h},
\end{cases}
\label{eq:mhd_erroreqn}
\end{align}
where
\begin{align*}
\widehat{a}( \bm{\xi}^{n}, \bm{\xi}_{h}^{n}, \theta_{ \bm{\xi} }^{n}, \bm{\eta}_{h} )
& =
c ( \theta_{ \bm{u} }^{n}, { \bm{u} }^{n}, \bm{v}_{h} )
+ c( { \bm{u} }^{n}_{h}, \theta_{ \bm{u} }^{n}, \bm{v}_{h} )
+ R_{e}^{-1} ( \nabla \theta_{ \bm{u} }^{n}, \nabla \bm{v}_{h} )
- s ( { \bm{j} }^{n} \times \theta_{ \bm{B} }^{n}, \bm{v}_{h} )
\\
& \qquad
- s ( \theta_{ \bm{j} }^{n} \times { \bm{B} }^{n}_{h}, \bm{v}_{h} )
+ \alpha \left(
\nabla \times ( \theta_{ \bm{j} }^{n}  - \bm{u}^{n} \times \theta_{ \bm{B} }^{n}
- \theta_{ \bm{u} }^{n} \times \bm{B}_{h}^{n} ),
\bm{C}_{h} \right)
\\
& \qquad
+ s ( \theta_{ \bm{j} }^{n}, \bm{F}_{h} )
- \alpha ( \theta_{ \bm{B} }^{n}, \nabla \times \bm{F}_{h} ).
\end{align*}

\subsection{Main results}
We summarize main results of this paper for error estimates of \eqref{eq:fully_picard} in the following theorem.
\begin{theorem}\label{thm:error_estimate_summary}
For any fixed time step $m$ such that $1\leq m \leq N$, if $\bm{\xi}^{m} = ( \bm{u}^{m}, \bm{B}^{m}, \bm{E}^{m} )$ is the solution to \eqref{eq:mhd_variational}, and $\bm{\xi}_{h}^{m} = ( \bm{u}^{m}_{h}, \bm{B}^{m}_{h}, \bm{E}^{m}_{h} )$ is the solution to \eqref{eq:fully_picard}, the following estimates hold:
\begin{enumerate}
\item There exists a constant $C$, which only depends on $\Vert \bm{u}^{n} \Vert_{0, \infty}$, $\Vert \nabla \bm{u}^{n} \Vert_{0, 3}$, $\Vert \bm{j}^{n} \Vert_{0, \infty}$ and the computation domain, such that
\begin{align}
 & \Vert \bm{u}^{m} -\bm{u}^{m}_{h} \Vert^{2}
+ \alpha \Vert \bm{B}^{m} - \bm{B}^{m}_h\Vert^{2}
+ R_{e}^{-1} \VERT \nabla \theta_{\bm{u}} \VERT^{2}_{m, 0}
+ s \VERT \bm{j} - \bm{j}_{h} \VERT_{m, 0}^{2}
\leq
C \rho_0,
\label{eq:estimate_xi_J}
\end{align}
when the time step size $k$ is sufficiently small. 
And there also holds
\begin{align}
& k \VERT p - p_{h} \VERT_{m,0}^{2}
\leq C\left(\max_{1\leq n \leq N} \Vert \rho_p^n\Vert^2+ \rho_0 + h^{-1} \rho_0^2\right).
\label{eq:error_p1}
\end{align}

\item  There exists a constant $C$, which only depends on $\Vert \bm{u}^{n} \Vert_{0, \infty}$, $\Vert \nabla \bm{u}^{n} \Vert_{0, 3}$, $\Vert \bm{B}^{n} \Vert_{0, \infty}$, $\Vert \bm{j}^{n} \Vert_{0, \infty}$ and the computation domain, such that 
\begin{align}
& \VERT \bm{E} - \bm{E}_{h} \VERT^{2}_{m, 0}
+ k \VERT \nabla \times \bm{E} - \nabla \times \bm{E}_{h} \VERT_{m, 0}^{2}
\leq
 C \rho_0 \left(1+h^{-1}\rho_0\right).
\label{eq:error_curlE}
\end{align}

\item There exists a constant $C$, which only depends on $\Vert \bm{u}^{n} \Vert_{0, \infty}$, $\Vert \nabla \bm{u}^{n} \Vert_{0, 3}$, $\Vert \bm{B}^{n} \Vert_{0, \infty}$, $\Vert \bm{j}^{n} \Vert_{0, \infty}$ and the computation domain, such that 
\begin{align}
& \VERT p - p_{h} \VERT_{m, 0}^{2}
\leq
C\left(\rho_0+\rho_0^2h^{-3}+\rho_1+\rho_1\rho_0 h^{-1}+\max_{1\leq n \leq N} \Vert \rho_p^n\Vert^2\right).
\label{eq:error_p2}
\end{align}

\end{enumerate}
\end{theorem}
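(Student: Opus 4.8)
The plan is to work entirely from the error equation \eqref{eq:mhd_erroreqn}, in which the total error $e_{\bm\xi}^n=\rho_{\bm\xi}^n+\theta_{\bm\xi}^n$ is already split into a projection part $\rho_{\bm\xi}^n$, whose size is $\rho_0$ (resp.\ $\rho_1$) by the approximation properties of the Stokes projection, the $L^2$ projection and the canonical $H(\mathrm{curl})$ interpolation, and a finite element part $\theta_{\bm\xi}^n\in\bm X_h$ to be estimated. Since $a$ carries dissipation only in $\nabla\bm u$ and in $\bm j$ and is not coercive on $\bm X$, C\'ea's lemma is unavailable, so the whole point is to replay, at the discrete level, the algebraic cancellation behind the structure-preserving energy identity of the Energy estimates theorem.

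For part (1) I would test \eqref{eq:mhd_erroreqn} with $\bm\eta_h=\theta_{\bm\xi}^n$, i.e.\ $\bm v_h=\theta_{\bm u}^n$, $\bm C_h=\theta_{\bm B}^n$, $\bm F_h=\theta_{\bm E}^n$ (note $\theta_{\bm E}^n\in\bm V_h^c$, whereas $\theta_{\bm j}^n$ is not), together with $q_h=\theta_p^n$. The Stokes projection forces $\nabla\cdot\theta_{\bm u}^n=0$ (it is $L^2$-orthogonal to $Q_h$ and lies in $Q_h$), so both $b(\theta_{\bm\xi}^n,\theta_p^n)$ and $b(\theta_{\bm\xi}^n,\rho_p^n)$ vanish, and for the same reason the term $R_e^{-1}(\nabla\rho_{\bm u}^n,\nabla\theta_{\bm u}^n)$ drops out, which is why no $\|\nabla\rho_{\bm u}^n\|$ enters here. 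Inside $\widehat a(\bm\xi^n,\bm\xi_h^n,\theta_{\bm\xi}^n,\theta_{\bm\xi}^n)$ the antisymmetry $c(\bm w,\bm v,\bm v)=0$, the identity $\theta_{\bm j}^n-\bm u^n\times\theta_{\bm B}^n-\theta_{\bm u}^n\times\bm B_h^n=\theta_{\bm E}^n$, the skew pairing $\alpha(\nabla\times\theta_{\bm E}^n,\theta_{\bm B}^n)-\alpha(\theta_{\bm B}^n,\nabla\times\theta_{\bm E}^n)=0$, and the cancellation of the two terms carrying $\theta_{\bm j}^n\times\bm B_h^n$ leave exactly the dissipation $R_e^{-1}\|\nabla\theta_{\bm u}^n\|^2+s\|\theta_{\bm j}^n\|^2$ plus three benign cross terms $c(\theta_{\bm u}^n,\bm u^n,\theta_{\bm u}^n)$, $-s(\bm j^n\times\theta_{\bm B}^n,\theta_{\bm u}^n)$, $-s(\theta_{\bm j}^n,\bm u^n\times\theta_{\bm B}^n)$, each a product of one smooth factor and two discrete errors. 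These and the $\rho$-dependent right-hand side are bounded by H\"older and the embeddings $H^1\hookrightarrow L^6$, $H^1\hookrightarrow L^3$ using $\|\bm u^n\|_{0,\infty}$, $\|\nabla\bm u^n\|_{0,3}$, $\|\bm j^n\|_{0,\infty}$; part of each is absorbed into the dissipation and the rest left as $\gamma_n(\|\theta_{\bm u}^n\|^2+\alpha\|\theta_{\bm B}^n\|^2)+C\rho_0$ (the consistency error $\|\bar\partial\bm\xi^n-\bm\xi_t^n\|^2$ and $\|\bar\partial\rho_{\bm\xi}^n\|^2$ being part of $\rho_0$). Using $(A\bar\partial\theta_{\bm\xi}^n,\theta_{\bm\xi}^n)\ge\tfrac{1}{2k}(\|\cdot\|_n^2-\|\cdot\|_{n-1}^2)$, multiplying by $k$, summing and invoking Gronwall's inequality (Theorem \ref{lemma:Gronwall_ineqn}) — whose hypothesis $k\gamma_i<1$ is exactly the smallness of $k$ — gives \eqref{eq:estimate_xi_J} after restoring $\rho_{\bm\xi}^n$ by the triangle inequality. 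The weak pressure bound \eqref{eq:error_p1} then follows from the discrete inf-sup condition for $(\bm V_h,Q_h)$, reading $b(\bm v_h,\theta_p^n)$ off the momentum line of \eqref{eq:mhd_erroreqn}, bounding $\bar\partial\theta^n=k^{-1}(\theta^n-\theta^{n-1})$ crudely and applying an inverse inequality to the quadratic convection/Lorentz pieces — the source of the $k$ and $h^{-1}$ on the right of \eqref{eq:error_p1}.

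For part (2), since part (1) controls $\theta_{\bm j}$ but not $\theta_{\bm E}$, I would write $\theta_{\bm E}^n=\theta_{\bm j}^n-\bm u^n\times\theta_{\bm B}^n-\theta_{\bm u}^n\times\bm B_h^n$, bound the middle term by $\|\bm u^n\|_{0,\infty}\|\theta_{\bm B}^n\|$, and split $\theta_{\bm u}^n\times\bm B_h^n=\theta_{\bm u}^n\times\bm B^n-\theta_{\bm u}^n\times e_{\bm B}^n$, the first piece controlled by $\|\bm B^n\|_{0,\infty}$ and the genuinely quadratic remainder $\theta_{\bm u}^n\times e_{\bm B}^n$ giving, after an inverse inequality, the $h^{-1}\rho_0$ factor of \eqref{eq:error_curlE}. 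For $\nabla\times\bm E$ I would use that the discrete induction equation on the N\'ed\'elec--Raviart--Thomas pair reads $\nabla\times\bm E_h^n=-\bar\partial\bm B_h^n$, hence $\nabla\times\theta_{\bm E}^n=-\bar\partial\theta_{\bm B}^n+\bar\partial\rho_{\bm B}^n$, so that $k\VERT\nabla\times\theta_{\bm E}\VERT_{m,0}^2$ is controlled through $\sum_n\|\theta_{\bm B}^n-\theta_{\bm B}^{n-1}\|^2$, which the energy argument of part (1) already bounds by $C\rho_0$. Part (3) is the delicate one: to remove the extra $k$ from \eqref{eq:error_p1} one needs a true bound on $\|A\bar\partial\theta_{\bm\xi}^n\|$, which I would obtain by differencing \eqref{eq:mhd_erroreqn} in time (writing it at levels $n$ and $n-1$, subtracting, testing with $\bar\partial\theta_{\bm\xi}^n$); this costs one extra time derivative and hence brings in $\rho_1$ and $\|\nabla\rho_{\bm u}\|$, and converting the time-averaged control $\VERT\nabla\theta_{\bm u}\VERT_{m,0}$ of part (1) into the pointwise-in-time quantities needed for the quadratic terms by inverse inequalities produces the $h^{-3}\rho_0^2$ and $h^{-1}\rho_1\rho_0$ terms; combining through the inf-sup inequality and summing yields \eqref{eq:error_p2}.

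The hard part will be part (1). Because of the loss of coercivity the estimate closes only if the Lorentz-force and induction couplings cancel algebraically exactly as in the discrete energy law, and only if every nonlinear cross term is controlled using \emph{solely} the a priori $L^\infty$/$L^3$ regularity of the exact solution — no discrete $L^\infty$ bound on $\bm u_h$ or $\bm B_h$, and no inductive bootstrap — which is what makes the estimate hold for $k$ small independently of $h$. Once part (1) is in place, parts (2) and (3) are largely bookkeeping, the only genuine subtlety being the passage from $L^2$-in-time to pointwise-in-time norms, which is exactly what forces the inverse-inequality factors $h^{-1}$, $h^{-3}$ and the CFL-type requirements on $k$.
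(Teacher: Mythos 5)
Your treatment of \eqref{eq:estimate_xi_J}, \eqref{eq:error_p1} and \eqref{eq:error_curlE} is essentially the paper's argument: the same test function $\bm{\eta}_h=\bigl(\theta_{\bm{u}}^n,\theta_{\bm{B}}^n,\theta_{\bm{j}}^n-\bm{u}^n\times\theta_{\bm{B}}^n-\theta_{\bm{u}}^n\times\bm{B}_h^n\bigr)$, the same algebraic cancellations (Lemma \ref{lemma:garding_condition}), the same truncation bound (Lemma \ref{lemma:truncation_error}), Gronwall under the smallness condition $k\gamma_i<1$, and for the pressure the inf-sup argument combined with the crude bound $k\VERT\bar{\partial}\theta_{\bm{u}}\VERT_{m,0}^2\leq C\rho_0$ of \eqref{eq:error_partial_thetau}; your split $\bm{B}_h^n=\bm{B}^n-e_{\bm{B}}^n$ in part (2) is an immaterial variant of the paper's $\bm{B}_h^n=\widehat{\bm{B}}_h^n-\theta_{\bm{B}}^n$. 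Two small imprecisions: for Taylor--Hood elements $\nabla\cdot\theta_{\bm{u}}^n$ does not lie in $Q_h$, so it is not pointwise zero; what you actually have is $(\nabla\cdot\theta_{\bm{u}}^n,q_h)=0$ for all $q_h\in Q_h$ (which suffices for $b(\bm{\eta}_h,\theta_p^n)=0$), while $R_e^{-1}(\nabla\rho_{\bm{u}}^n,\nabla\theta_{\bm{u}}^n)$ and $(\rho_p^n,\nabla\cdot\theta_{\bm{u}}^n)$ cancel only \emph{jointly}, through the Stokes-projection identity, exactly as in Lemma \ref{lemma:truncation_error}. Likewise your identity $\nabla\times\theta_{\bm{E}}^n=-\bar{\partial}\theta_{\bm{B}}^n+\bar{\partial}\rho_{\bm{B}}^n$ drops the consistency term $\bar{\partial}\bm{B}^n-\bm{B}_t^n$ and $\nabla\times\rho_{\bm{E}}^n$; this is harmless since both are absorbed in $\rho_0$, and the weak form used in the paper (testing the induction error equation with $\bm{C}_h=\nabla\times\theta_{\bm{E}}^n$) avoids the issue.

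The genuine divergence, and the gap, is part (3). The paper does \emph{not} difference the error equation in time: Lemma \ref{lemma:partial_thetau} is obtained by testing the undifferenced momentum error equation with $\bm{v}_h=\bar{\partial}\theta_{\bm{u}}^n$, so that $\Vert\bar{\partial}\theta_{\bm{u}}^n\Vert^2$ appears on the left and $R_e^{-1}(\nabla\theta_{\bm{u}}^n,\nabla\bar{\partial}\theta_{\bm{u}}^n)$ telescopes; $\rho_1$ enters only because, with this test function, the convection terms carrying $\rho_{\bm{u}}^n$ can no longer be integrated by parts onto the test function and must be bounded through $\Vert\nabla\rho_{\bm{u}}^n\Vert$ --- $\rho_1$ has nothing to do with an extra time derivative, it merely replaces $\Vert\rho_{\bm{u}}\Vert$ by $\Vert\nabla\rho_{\bm{u}}\Vert$ in the definition. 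Your plan --- write \eqref{eq:mhd_erroreqn} at levels $n$ and $n-1$, subtract, and test with $\bar{\partial}\theta_{\bm{\xi}}^n$ --- is a different and substantially heavier route, and as sketched it does not close: differencing the convection and Lorentz terms produces factors such as $\bar{\partial}\bm{u}_h^n$ and $\bar{\partial}\bm{B}_h^n$ that are not controlled a priori, the summation then requires a separate first-step estimate of $\Vert\bar{\partial}\theta_{\bm{u}}^1\Vert$, and the differenced consistency and projection terms involve second time differences of the exact solution and of $\rho_{\bm{\xi}}$, none of which are contained in $\rho_0$ or $\rho_1$; hence it cannot produce \eqref{eq:error_p2} with the stated right-hand side. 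Since the inf-sup step only needs the discrete $L^2$-in-time norm $\VERT\bar{\partial}\theta_{\bm{u}}\VERT_{m,0}$, the direct test with $\bar{\partial}\theta_{\bm{u}}^n$ is both sufficient and much simpler.
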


By similar arguments similar to Theorem \ref{thm:error_estimate_summary}, we can get the error estimates of the Picard linearization scheme \eqref{prob:picard_linearization} as follows.
\begin{theorem}\label{thm:error estimate picard linearization}
For any fixed time step $m$ such that $1\leq m \leq N$, we have the following error estimates of \eqref{prob:picard_linearization}:
\begin{enumerate}
\item There exists a constant $C$, only depending on the exact solution, such that
\begin{align*}
 & \Vert \bm{u}^{m} -\bm{u}^{m}_{h} \Vert^{2}
+ \alpha \Vert \bm{B}^{m} - \bm{B}^{m}_h\Vert^{2}
+ R_{e}^{-1} \VERT \nabla \theta_{\bm{u}} \VERT^{2}_{m, 0}
+ s \VERT \bm{j} - \bm{j}_{h} \VERT_{m, 0}^{2}
\leq C\rho_0,
\end{align*}
when the time step size $k$ is sufficiently small.  And there also holds 
\begin{align*}
& k \VERT p - p_{h} \VERT_{m,0}^{2}
\leq
 C\left(\max_{1\leq n \leq N} \Vert \rho_p^n\Vert^2+ \rho_0 + h^{-1} \rho_0^2\right).
\end{align*}

\item
There exists a constant $C$ only depending on exact solution such that
\begin{align*}
& \VERT \bm{E} - \bm{E}_{h} \VERT^{2}_{m, 0}
+ k \VERT \nabla \times \bm{E} - \nabla \times \bm{E}_{h} \VERT_{m, 0}^{2}
\leq
C\rho_0\left(1+h^{-1}\rho_0\right),
\end{align*}
when the time step size $k$ is sufficiently small.

\item 
There exists a constant $C$ only depending on exact solution such that
\begin{align*}
& \VERT p - p_{h} \VERT_{m, 0}^{2}
\leq
C\left(\rho_0+\rho_0^2h^{-3}+\rho_1+\rho_1\rho_0 h^{-1}+\max_{1\leq n \leq N} \Vert \rho_p^n\Vert^2\right).
\end{align*}
when the time step size $k$ is sufficiently small.
\end{enumerate}
\end{theorem}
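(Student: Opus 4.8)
The plan is to reproduce the proof of Theorem~\ref{thm:error_estimate_summary} almost line by line, the only structural difference being that the convection and Lorentz terms of \eqref{eq:picard_linearization} carry the time-lagged factors $\bm{u}_h^{n-1}$ and $\bm{B}_h^{n-1}$. First I would subtract \eqref{eq:picard_linearization} from \eqref{eq:variational_form} at $t=t_n$, split every error as $e_{\bm w}^n=\rho_{\bm w}^n+\theta_{\bm w}^n$ with the same Stokes, $L^2$, and canonical-interpolation projections used in \S\ref{sec:error_estimates}, and obtain an error equation of the same shape as \eqref{eq:mhd_erroreqn} plus a few extra terms. These come from splittings such as
\begin{align*}
c(\bm{u}^n,\bm{u}^n,\bm{v}_h)-c(\bm{u}_h^{n-1},\bm{u}_h^n,\bm{v}_h)
&=c(\bm{u}^n-\bm{u}^{n-1},\bm{u}^n,\bm{v}_h) \\
&\quad +c(e_{\bm u}^{n-1},\bm{u}^n,\bm{v}_h)+c(\bm{u}_h^{n-1},e_{\bm u}^n,\bm{v}_h),
\end{align*}
together with the analogous splitting of $\bm{j}^n\times\bm{B}^n-\bm{j}_h^n\times\bm{B}_h^{n-1}$ and the modified identity $e_{\bm j}^n=e_{\bm E}^n+\bm{u}^n\times e_{\bm B}^{n-1}+\bm{u}^n\times(\bm{B}^n-\bm{B}^{n-1})+e_{\bm u}^n\times\bm{B}_h^{n-1}$. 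The genuinely new contributions $c(\bm{u}^n-\bm{u}^{n-1},\bm{u}^n,\bm{v}_h)$ and $\bm{u}^n\times(\bm{B}^n-\bm{B}^{n-1})$ are $O(k)$ in $L^2$, since $\bm{w}^n-\bm{w}^{n-1}=\int_{t_{n-1}}^{t_n}\bm{w}_t\,dt$, so I would carry them on the data side and treat them exactly as the truncation error $A\bar\partial\bm{\xi}^n-A\bm{\xi}_t^n$ is treated in Theorem~\ref{thm:error_estimate_summary}; they are already dominated by $\rho_0$.

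Next I would take $\bm{\eta}_h=\theta_{\bm\xi}^n$ and $q_h=\theta_p^n$. As in the nonlinear case, $c(\bm{u}_h^{n-1},\theta_{\bm u}^n,\theta_{\bm u}^n)=0$ by the skew-symmetry of $c(\cdot,\cdot,\cdot)$ in \eqref{eq:convection_trilinear}, the divergence terms satisfy $b(\theta_{\bm\xi}^n,\theta_p^n)=-b(\rho_{\bm\xi}^n,\theta_p^n)=0$, the Stokes contribution $R_e^{-1}(\nabla\rho_{\bm u}^n,\nabla\theta_{\bm u}^n)$ cancels against $-b(\theta_{\bm\xi}^n,\rho_p^n)$ by the definition of the Stokes projection, and the magnetic/current block (which is identical to that of \eqref{eq:fully_picard}) yields the dissipative quantities $R_e^{-1}\|\nabla\theta_{\bm u}^n\|^2$ and $2s\|\theta_{\bm j}^n\|^2$ on the left. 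The surviving terms I would bound by H\"older's inequality in terms of $\|\bm{u}^n\|_{0,\infty}$, $\|\nabla\bm{u}^n\|_{0,3}$, $\|\bm{B}^n\|_{0,\infty}$, $\|\bm{j}^n\|_{0,\infty}$, the $\rho$-quantities, and $\max_n(\|\bm{u}_h^n\|^2+\alpha\|\bm{B}_h^n\|^2)$, which is bounded by the discrete energy estimate of \S\ref{sec:discretization} (this is what controls $\|\bm{u}_h^{n-1}\|$ and $\|\bm{B}_h^{n-1}\|$); every non-dissipative $\theta^n$-factor is moved to the right by a Young inequality. The result is an inequality $a_n+k\sum b_i\le k\sum\gamma_i a_i+k\sum c_i+B$ with $\gamma_i$ depending only on the exact solution, so Gronwall's inequality (Theorem~\ref{lemma:Gronwall_ineqn}), valid once $k$ is small enough that $k\gamma_i<1$, produces part~(1) for $\bm{u}$, $\bm{B}$, $\bm{j}$. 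The cross terms $c(e_{\bm u}^{n-1},\bm{u}^n,\theta_{\bm u}^n)$ etc.\ only add $a_{n-1}$ next to $a_n$ on the right, which the Gronwall step still absorbs.

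For the pressure I would invoke the discrete inf-sup condition for $(\bm{V}_h,Q_h)$: the momentum part of the error equation expresses $b(\bm{v}_h,\theta_p^n)$ through $A_1\bar\partial\theta_{\bm\xi}^n$, $\nabla\theta_{\bm u}^n$, the bilinear terms, and the $\rho$-quantities, so $\|\theta_p^n\|$ is controlled by those; an inverse inequality on $\bar\partial\theta_{\bm u}^n$ produces the $h^{-1}$ and $h^{-3}$ powers in \eqref{eq:error_p1}--\eqref{eq:error_p2}, while the time-lag terms only add further $O(k)$, $\rho_0$, $\rho_1$ contributions. For $\bm E$, the discrete current equation $s(\bm{j}_h^n,\bm{F}_h)-\alpha(\bm{B}_h^n,\nabla\times\bm{F}_h)=0$ is unchanged, so subtracting it from its continuous counterpart and using the (lagged) relation between $\theta_{\bm j}^n$ and $\theta_{\bm E}^n,\theta_{\bm B}^n,\theta_{\bm u}^n$ recovers \eqref{eq:error_curlE} from the bound on $\theta_{\bm j}^n$ already obtained, the factor $\bm{B}_h^{n-1}$ again costing only an $O(k)$ correction. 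I expect the main obstacle to be exactly the one present in Theorem~\ref{thm:error_estimate_summary}: at a single time level $\|\nabla\bm{u}_h^n\|$ is not bounded by the energy estimate, so terms such as $c(\bm{u}_h^{n-1},\rho_{\bm u}^n,\theta_{\bm u}^n)$ and $s(\theta_{\bm j}^n\times\bm{B}_h^{n-1},\theta_{\bm u}^n)$ must be split via \eqref{eq:convection_trilinear} and estimated so that every occurrence of $\|\nabla\theta_{\bm u}^n\|$ or $\|\theta_{\bm j}^n\|$ carries a small factor and is hidden in the left-hand dissipation, leaving coefficients depending only on $\|\bm{u}_h^{n-1}\|$, $\|\bm{B}_h^{n-1}\|$ and norms of the exact solution; this is precisely the bookkeeping already carried out for \eqref{eq:fully_picard}, and the time-lagging does not change its nature.
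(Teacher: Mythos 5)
Your proposal follows essentially the same route the paper intends: the paper omits this proof, stating it is the same as that of Theorem \ref{thm:error_estimate_summary}, and your sketch reproduces that argument with the correct extra bookkeeping for the time lag (the $O(k)$ differences $\bm{u}^n-\bm{u}^{n-1}$, $\bm{B}^n-\bm{B}^{n-1}$, the lagged identity for $e_{\bm j}^n$, the unchanged cancellations from skew-symmetry and the Stokes projection, and the shifted $a_{n-1}$ terms absorbed by Gronwall's inequality). The only cosmetic imprecision is writing the test function as $\bm{\eta}_h=\theta_{\bm\xi}^n$: since $\theta_{\bm j}^n\notin\bm{V}_h^c$, the admissible choice is $\left(\theta_{\bm u}^n,\theta_{\bm B}^n,\theta_{\bm E}^n\right)$ with $\theta_{\bm E}^n$ expressed through the lagged relation, which is what your description of the dissipative terms already uses implicitly.
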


As the proof of the above theorem is similar to that of theorem \ref{thm:error_estimate_summary}, we omitted its details in this paper.

\subsection{Proof of Theorem \ref{thm:error_estimate_summary}}

The basic idea of our proof is to verify that $\widehat{a}( \bm{\xi}^{n}, \bm{\xi}_{h}^{n}, \theta_{ \bm{\xi} }^{n}, \bm{\eta}_{h} )$ satisfies the G\r{a}rding condition, and the truncation error is bounded by some norm of $\rho_{ \bm{\xi} }$. Then the conclusion follows by Gronwall's inequality.

First of all, we prove that $\widehat{a}( \bm{\xi}^{n}, \bm{\xi}_{h}^{n}, \theta_{ \bm{\xi} }^{n}, \bm{\eta}_{h} )$ satisfies the G\r{a}rding condition.

\begin{lemma}\label{lemma:garding_condition}
The sum of bilinear form $\widehat{a}( \bm{\xi}^{n}, \bm{\xi}_{h}^{n}, \theta_{ \bm{\xi} }^{n}, \bm{\eta}_{h} )$ satisfies the G\r{a}rding condition. That is, for any $\theta_{ \bm{\xi} }^{n} \in \bm{X}_{h}$, there exists $\bm{\eta}_{h} \in \bm{X}_{h}$, such that
\begin{align*}
& \widehat{a}( \bm{\xi}^{n}, \bm{\xi}_{h}^{n}, \theta_{ \bm{\xi} }^{n}, \bm{\eta}_{h} )
\geq
\beta_{1} \Vert \nabla \theta_{\bm{u}}^{n} \Vert^{2}
	+ \beta_{1} \Vert \theta_{\bm{j}}^{n} \Vert^{2}
	- \beta_{0} \Vert \theta_{\bm{u}}^{n} \Vert^{2}
	- \beta_{0} \Vert \theta_{\bm{B}}^{n} \Vert^{2},
\end{align*}
where $\beta_{0}$ and $\beta_{1}$ are positive constants that only depend on $\Vert \bm{u}^{n} \Vert_{0, \infty}$, $\Vert \bm{j}^{n} \Vert_{0, \infty}$, and the computation domain.
\end{lemma}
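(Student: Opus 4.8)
The plan is to reproduce, at the level of the discrete error equation, the cancellation structure underlying the energy estimate of the scheme. Given $\theta_{\bm{\xi}}^{n}=(\theta_{\bm{u}}^{n},\theta_{\bm{B}}^{n},\theta_{\bm{j}}^{n})$, I would choose the ``diagonal'' test function $\bm{\eta}_{h}=(\theta_{\bm{u}}^{n},\,\theta_{\bm{B}}^{n},\,\theta_{\bm{E}}^{n})$, where $\theta_{\bm{E}}^{n}:=\theta_{\bm{j}}^{n}-\bm{u}^{n}\times\theta_{\bm{B}}^{n}-\theta_{\bm{u}}^{n}\times\bm{B}_{h}^{n}=\widehat{\bm{E}}_{h}^{n}-\bm{E}_{h}^{n}\in\bm{V}_{h}^{c}$. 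This is the error counterpart of the energy-estimate test functions $(\bm{u},\bm{B},\bm{E})$; the point of inserting $\theta_{\bm{E}}^{n}$ rather than $\theta_{\bm{j}}^{n}$ in the $\bm{V}^{c}$-slot is precisely that $\theta_{\bm{E}}^{n}$, unlike $\theta_{\bm{j}}^{n}$, is a conforming element of $\bm{V}_{h}^{c}$, so that $\bm{\eta}_{h}\in\bm{X}_{h}$.

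With this choice I would carry out the bookkeeping using three elementary identities. First, skew-symmetry of $c(\cdot,\cdot,\cdot)$ in its last two arguments gives $c(\bm{u}_{h}^{n},\theta_{\bm{u}}^{n},\theta_{\bm{u}}^{n})=0$. Second, with $\bm{C}_{h}=\theta_{\bm{B}}^{n}$ and $\bm{F}_{h}=\theta_{\bm{E}}^{n}$ the two curl contributions cancel exactly, $\alpha(\nabla\times\theta_{\bm{E}}^{n},\theta_{\bm{B}}^{n})-\alpha(\theta_{\bm{B}}^{n},\nabla\times\theta_{\bm{E}}^{n})=0$. Third, expanding $s(\theta_{\bm{j}}^{n},\theta_{\bm{E}}^{n})=s\Vert\theta_{\bm{j}}^{n}\Vert^{2}-s(\theta_{\bm{j}}^{n},\bm{u}^{n}\times\theta_{\bm{B}}^{n})-s(\theta_{\bm{j}}^{n},\theta_{\bm{u}}^{n}\times\bm{B}_{h}^{n})$ and using the scalar triple-product identity $(\theta_{\bm{j}}^{n}\times\bm{B}_{h}^{n})\cdot\theta_{\bm{u}}^{n}=-\theta_{\bm{j}}^{n}\cdot(\theta_{\bm{u}}^{n}\times\bm{B}_{h}^{n})$, the term $-s(\theta_{\bm{j}}^{n},\theta_{\bm{u}}^{n}\times\bm{B}_{h}^{n})$ cancels the Lorentz contribution $-s(\theta_{\bm{j}}^{n}\times\bm{B}_{h}^{n},\theta_{\bm{u}}^{n})$. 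After these cancellations what survives is
\begin{align*}
\widehat{a}(\bm{\xi}^{n},\bm{\xi}_{h}^{n},\theta_{\bm{\xi}}^{n},\bm{\eta}_{h})
&= R_{e}^{-1}\Vert\nabla\theta_{\bm{u}}^{n}\Vert^{2}+s\Vert\theta_{\bm{j}}^{n}\Vert^{2}
+c(\theta_{\bm{u}}^{n},\bm{u}^{n},\theta_{\bm{u}}^{n}) \\
&\quad -s(\bm{j}^{n}\times\theta_{\bm{B}}^{n},\theta_{\bm{u}}^{n})
-s(\bm{u}^{n}\times\theta_{\bm{B}}^{n},\theta_{\bm{j}}^{n}).
\end{align*}
In particular every term involving the numerical unknowns $\bm{u}_{h}^{n}$, $\bm{B}_{h}^{n}$ has dropped out, which is what allows the final constants to depend only on the exact solution.

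It then remains to absorb the three residual trilinear/bilinear terms into $R_{e}^{-1}\Vert\nabla\theta_{\bm{u}}^{n}\Vert^{2}+s\Vert\theta_{\bm{j}}^{n}\Vert^{2}$ plus a multiple of $\Vert\theta_{\bm{u}}^{n}\Vert^{2}+\Vert\theta_{\bm{B}}^{n}\Vert^{2}$. For the convection term I would write $c(\theta_{\bm{u}}^{n},\bm{u}^{n},\theta_{\bm{u}}^{n})=\frac{1}{2}(\theta_{\bm{u}}^{n}\cdot\nabla\bm{u}^{n},\theta_{\bm{u}}^{n})-\frac{1}{2}(\theta_{\bm{u}}^{n}\cdot\nabla\theta_{\bm{u}}^{n},\bm{u}^{n})$ and integrate by parts in the first summand — the boundary term vanishes because $\theta_{\bm{u}}^{n}\in H_{0}^{1}(\Omega)^{3}$ — to move the derivative off $\bm{u}^{n}$ onto $\theta_{\bm{u}}^{n}$, obtaining $|c(\theta_{\bm{u}}^{n},\bm{u}^{n},\theta_{\bm{u}}^{n})|\le C\Vert\bm{u}^{n}\Vert_{0,\infty}\Vert\nabla\theta_{\bm{u}}^{n}\Vert\,\Vert\theta_{\bm{u}}^{n}\Vert$; this is why $\Vert\nabla\bm{u}^{n}\Vert_{0,3}$ need not enter the constant here. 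The remaining two terms are bounded directly by $s\Vert\bm{j}^{n}\Vert_{0,\infty}\Vert\theta_{\bm{B}}^{n}\Vert\,\Vert\theta_{\bm{u}}^{n}\Vert$ and $s\Vert\bm{u}^{n}\Vert_{0,\infty}\Vert\theta_{\bm{B}}^{n}\Vert\,\Vert\theta_{\bm{j}}^{n}\Vert$. A few applications of Young's inequality, absorbing half of $R_{e}^{-1}\Vert\nabla\theta_{\bm{u}}^{n}\Vert^{2}$ and half of $s\Vert\theta_{\bm{j}}^{n}\Vert^{2}$, then give the claimed inequality with, for instance, $\beta_{1}=\frac{1}{2}\min(R_{e}^{-1},s)$ and $\beta_{0}$ depending only on $R_{e}$, $s$, $\Vert\bm{u}^{n}\Vert_{0,\infty}$, $\Vert\bm{j}^{n}\Vert_{0,\infty}$ and $\Omega$.

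The only genuinely delicate step is the bookkeeping in the middle paragraph: one has to check that \emph{every} occurrence of the discrete quantities $\bm{u}_{h}^{n}$, $\bm{B}_{h}^{n}$ cancels (so that the constants remain independent of the numerical solution, as the lemma asserts), and one must remember to integrate by parts in the convection term rather than estimate $(\theta_{\bm{u}}^{n}\cdot\nabla\bm{u}^{n},\theta_{\bm{u}}^{n})$ by H\"older with an $L^{3}$ norm of $\nabla\bm{u}^{n}$. Everything else is a routine coercivity-up-to-lower-order-terms computation.
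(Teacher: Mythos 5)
Your proposal is correct and follows essentially the same route as the paper: the same test function $\bm{\eta}_{h}=(\theta_{\bm{u}}^{n},\theta_{\bm{B}}^{n},\theta_{\bm{E}}^{n})$ with $\theta_{\bm{E}}^{n}=\theta_{\bm{j}}^{n}-\bm{u}^{n}\times\theta_{\bm{B}}^{n}-\theta_{\bm{u}}^{n}\times\bm{B}_{h}^{n}\in\bm{V}_{h}^{c}$, the same three cancellations (skew-symmetry of $c$, the curl terms, the triple-product cancellation of the Lorentz term), and the same reduction to $R_{e}^{-1}\Vert\nabla\theta_{\bm{u}}^{n}\Vert^{2}+s\Vert\theta_{\bm{j}}^{n}\Vert^{2}$ minus lower-order terms handled by Young's inequality. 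Your explicit integration by parts in $c(\theta_{\bm{u}}^{n},\bm{u}^{n},\theta_{\bm{u}}^{n})$ supplies a detail the paper leaves implicit and is exactly what keeps the constants depending only on $\Vert\bm{u}^{n}\Vert_{0,\infty}$, $\Vert\bm{j}^{n}\Vert_{0,\infty}$ and the domain, as the lemma asserts.
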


\begin{proof}

By definition, we know that ${ {\theta} }_{ \bm{\xi} }^{n} = \left( { \theta}_{ \bm{u} }^{n}, { \theta}_{ \bm{B} }^{n}, { \theta}_{ \bm{j} }^{n} \right)$. Since $\theta_{ \bm{j} }^{n}  - \bm{u}^{n} \times \theta_{ \bm{B} }^{n}
- \theta_{ \bm{u} }^{n} \times \bm{B}_{h}^{n}  = \theta_{ \bm{E} }^{n} \in \bm{V}_{h}^{c}$, we choose $\bm{\eta}_{h} = \left(
{ \theta}_{ \bm{u} }^{n}, { \theta}_{ \bm{B} }^{n},
\theta_{ \bm{j} }^{n}  - \bm{u}^{n} \times \theta_{ \bm{B} }^{n}
- \theta_{ \bm{u} }^{n} \times \bm{B}_{h}^{n}
\right)$. Noticing that $c( { \bm{u} }^{n}_{h}, {\theta}_{ \bm{u} }^{n}, {\theta}_{ \bm{u} }^{n} ) = 0$, we get
	\begin{align*}
	& \widehat{a}( \bm{\xi}^{n}, \bm{\xi}_{h}^{n}, \theta_{ \bm{\xi} }^{n}, \bm{\eta}_{h} )
	\\
	= &
	c ( {\theta}_{ \bm{u} }^{n}, { \bm{u} }^{n}, {\theta}_{ \bm{u} }^{n} )
	+ c( { \bm{u} }^{n}_{h}, {\theta}_{ \bm{u} }^{n}, {\theta}_{ \bm{u} }^{n} )
	+ R_{e}^{-1} ( \nabla {\theta}_{ \bm{u} }^{n}, \nabla {\theta}_{ \bm{u} }^{n} )
	- s ( { \bm{j} }^{n} \times {\theta}_{ \bm{B} }^{n}, {\theta}_{ \bm{u} }^{n} )
	 \\
	& \quad
	- s ( {\theta}_{ \bm{j} }^{n} \times { \bm{B} }^{n}_{h}, {\theta}_{ \bm{u} }^{n} )
	+ s ( {\theta}_{ \bm{j} }^{n}, \theta_{ \bm{j} }^{n}  - \bm{u}^{n} \times \theta_{ \bm{B} }^{n}
	- \theta_{ \bm{u} }^{n} \times \bm{B}_{h}^{n} )
	\\
	= &
	c ( {\theta}_{ \bm{u} }^{n}, { \bm{u} }^{n}, {\theta}_{ \bm{u} }^{n} )
	+ R_{e}^{-1} ( \nabla {\theta}_{ \bm{u} }^{n}, \nabla {\theta}_{ \bm{u} }^{n} )
	- s ( { \bm{j} }^{n} \times {\theta}_{ \bm{B} }^{n}, {\theta}_{ \bm{u} }^{n} )
	+ s ( {\theta}_{ \bm{j} }^{n}, {\theta}_{ \bm{j} }^{n} )
	- s ( {\theta}_{ \bm{j} }^{n} , \bm{u}^{n} \times \theta_{ \bm{B} }^{n} ).
	\end{align*}
	By the Cauchy-Schwarz inequality, we obtain
	\begin{align*}
	& \lvert ( { \bm{j} }^{n} \times {\theta}_{ \bm{B} }^{n}, {\theta}_{ \bm{u} }^{n} ) \rvert
	\leq
	C \Vert {\theta}_{ \bm{B} }^{n} \Vert^{2}
	+ C \Vert {\theta}_{ \bm{u} }^{n} \Vert^{2},
	\\
	& \lvert ( {\theta}_{ \bm{j} }^{n} , \bm{u}^{n} \times \theta_{ \bm{B} }^{n} ) \rvert
	\leq
	C \Vert {\theta}_{ \bm{j} }^{n} \Vert^{2}
	+ C \Vert {\theta}_{ \bm{B} }^{n} \Vert^{2}.
	\end{align*}
	Therefore, the conclusion holds.

\end{proof}

\begin{lemma}\label{lemma:truncation_error}
The truncation error $\widehat{a}( \bm{\xi}^{n}, \bm{\xi}_{h}^{n}, \rho_{ \bm{\xi} }^{n}, \bm{\eta}_{h} )
+ b( \bm{\eta}_{h}, \rho_{p}^{n} )$ is bounded. That is, there exists a constant $C$ such that
\begin{align*}
\widehat{a}( \bm{\xi}^{n}, \bm{\xi}_{h}^{n}, \rho_{ \bm{\xi} }^{n}, \bm{\eta}_{h} )
+ b( \bm{\eta}_{h}, \rho_{p}^{n} )
\leq  &
C \left[
\Vert \nabla \theta_{\bm{u}}^{n} \Vert
\Vert \bm{v}_{h} \Vert
+
\left(
\Vert \theta_{\bm{u}}^{n} \Vert
+ \Vert \rho_{\bm{u}}^{n} \Vert
\right)
\Vert \nabla \bm{v}_{h} \Vert
+ \Vert \rho_{\bm{B}}^{n} \Vert
\Vert \bm{v}_{h} \Vert
\right.
\\
&
\left.
+ \Vert \nabla \times \rho_{\bm{E}}^{n} \Vert
\Vert \bm{C}_{h} \Vert
+
\Vert \rho_{ \bm{j} }^{n} \Vert
\left(
\Vert \bm{F}_{h} + \bm{v}_{h} \times { \bm{B} }^{n}_{h} + \bm{u}^{n} \times \bm{C}_{h} \Vert
+ \Vert \bm{C}_{h} \Vert \right)
\right],
\end{align*}
where $C$ only depends on $\Vert \bm{u}^{n} \Vert_{0, \infty}$, $\Vert \nabla \bm{u}^{n} \Vert_{0, 3}$, $\Vert \bm{j}^{n} \Vert_{0, \infty}$, and the computation domain.
\end{lemma}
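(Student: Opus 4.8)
The goal is to bound $\widehat{a}(\bm{\xi}^{n},\bm{\xi}_{h}^{n},\rho_{\bm{\xi}}^{n},\bm{\eta}_{h})+b(\bm{\eta}_{h},\rho_{p}^{n})$ term-by-term, using the explicit expression for $\widehat{a}$ given just after \eqref{eq:mhd_erroreqn} with $\rho_{\bm{\xi}}^{n}$ in place of $\theta_{\bm{\xi}}^{n}$. My plan is to go through the seven terms of $\widehat{a}(\bm{\xi}^{n},\bm{\xi}_{h}^{n},\rho_{\bm{\xi}}^{n},\bm{\eta}_{h})$ one at a time and apply Hölder/Cauchy--Schwarz, exploiting the regularity of the exact solution ($\Vert\bm{u}^{n}\Vert_{0,\infty}$, $\Vert\nabla\bm{u}^{n}\Vert_{0,3}$, $\Vert\bm{j}^{n}\Vert_{0,\infty}$) to absorb the exact-solution factors into the constant $C$. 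A crucial preliminary observation is that $\rho_{\bm{u}}^{n}$ comes from the \emph{Stokes projection}, so it is divergence-free against $Q_{h}$; this kills the pressure term, since $b(\bm{\eta}_{h},\rho_{p}^{n})=-(\nabla\cdot\bm{v}_{h},\rho_{p}^{n})$ does not appear on the right of \eqref{eq:error_p1}$\ldots$ — wait, more precisely the Stokes projection makes $R_{e}^{-1}(\nabla\rho_{\bm{u}}^{n},\nabla\bm{v}_{h})-(\rho_{p}^{n},\nabla\cdot\bm{v}_{h})=0$, so these two contributions cancel exactly and never enter the bound. That is why no $\Vert\nabla\rho_{\bm{u}}^{n}\Vert$ or $\Vert\rho_{p}^{n}\Vert$ term shows up.

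For the remaining terms I would argue as follows. The two convection terms $c(\rho_{\bm{u}}^{n},\bm{u}^{n},\bm{v}_{h})$ and $c(\bm{u}_{h}^{n},\rho_{\bm{u}}^{n},\bm{v}_{h})$: expand using \eqref{eq:convection_trilinear}; in each half-integrand put the $L^{\infty}$ or $L^{3}$ norm on the exact-solution factor, an $L^{2}$ norm on $\rho_{\bm{u}}^{n}$ and an $L^{2}$ or $L^{6}$ norm on $\bm{v}_{h}$ (the latter controlled by $\Vert\nabla\bm{v}_{h}\Vert$ via Sobolev embedding), producing the $(\Vert\theta_{\bm{u}}^{n}\Vert+\Vert\rho_{\bm{u}}^{n}\Vert)\Vert\nabla\bm{v}_{h}\Vert$ contribution — here I need to replace $\bm{u}_{h}^{n}$ by $\bm{u}^{n}-\rho_{\bm{u}}^{n}-\theta_{\bm{u}}^{n}$ so that the genuinely discrete part $\theta_{\bm{u}}^{n}$ appears, and use the energy estimate (Theorem, Energy estimates) or a Sobolev bound to handle $c(\theta_{\bm{u}}^{n},\rho_{\bm{u}}^{n},\bm{v}_{h})$. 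The diffusion term $R_{e}^{-1}(\nabla\rho_{\bm{u}}^{n},\nabla\bm{v}_{h})$ is cancelled as noted. The Lorentz-force terms $s(\bm{j}^{n}\times\rho_{\bm{B}}^{n},\bm{v}_{h})$ gives $\Vert\rho_{\bm{B}}^{n}\Vert\Vert\bm{v}_{h}\Vert$ using $\Vert\bm{j}^{n}\Vert_{0,\infty}$; $s(\rho_{\bm{j}}^{n}\times\bm{B}_{h}^{n},\bm{v}_{h})$ is grouped with the curl and Ampère terms below. The Faraday term $\alpha(\nabla\times(\rho_{\bm{j}}^{n}-\bm{u}^{n}\times\rho_{\bm{B}}^{n}-\rho_{\bm{u}}^{n}\times\bm{B}_{h}^{n}),\bm{C}_{h})$: recognizing that $\rho_{\bm{j}}^{n}-\bm{u}^{n}\times\rho_{\bm{B}}^{n}-\rho_{\bm{u}}^{n}\times\bm{B}_{h}^{n}=\rho_{\bm{E}}^{n}$ (from the definition of $\rho_{\bm{j}}^{n}$), this collapses to $\alpha(\nabla\times\rho_{\bm{E}}^{n},\bm{C}_{h})\le C\Vert\nabla\times\rho_{\bm{E}}^{n}\Vert\Vert\bm{C}_{h}\Vert$; the Ampère-type term $s(\rho_{\bm{j}}^{n},\bm{F}_{h})$ and the magnetic term $\alpha(\rho_{\bm{B}}^{n},\nabla\times\bm{F}_{h})$ — the latter I would integrate by parts or rather rewrite so that $\rho_{\bm{B}}^{n}$ pairs against $\bm{F}_{h}$ in a way compatible with the grouping $\Vert\bm{F}_{h}+\bm{v}_{h}\times\bm{B}_{h}^{n}+\bm{u}^{n}\times\bm{C}_{h}\Vert$, which is exactly the test combination appearing in the third line of $\widehat{a}$; collecting the three $\rho_{\bm{j}}^{n}$-contributions gives the last bracketed term.

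The one genuinely delicate point, and the main obstacle, is getting the \emph{right} test combination to appear: the terms $-s(\rho_{\bm{j}}^{n}\times\bm{B}_{h}^{n},\bm{v}_{h})$, $+s(\rho_{\bm{j}}^{n},\bm{F}_{h})$, and the part of the Faraday term involving $\rho_{\bm{j}}^{n}$ must be reassembled — using the vector identity $(\bm{a}\times\bm{B}_{h})\cdot\bm{v}_{h}=\bm{a}\cdot(\bm{v}_{h}\times\bm{B}_{h})$ and the analogous manipulation of the curl term — into a single pairing $s(\rho_{\bm{j}}^{n},\bm{F}_{h}+\bm{v}_{h}\times\bm{B}_{h}^{n}+\bm{u}^{n}\times\bm{C}_{h})$ plus a leftover $\alpha(\rho_{\bm{j}}^{n},\ldots)$ paired with $\bm{C}_{h}$ alone; the structure of $\widehat{a}$ is engineered precisely so that $\rho_{\bm{j}}^{n}$ only ever tests against this curl-friendly combination, mirroring the trick in Lemma \ref{lemma:garding_condition} where $\bm{\eta}_{h}$ was chosen so that $\theta_{\bm{E}}^{n}$ appeared. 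Once this regrouping is done correctly, the final bound follows by a routine Cauchy--Schwarz applied to each resulting pairing, with all exact-solution norms (and the harmless $\Vert\bm{B}_{h}^{n}\Vert$, controlled by the energy estimate) swept into $C$.
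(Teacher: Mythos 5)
Your overall architecture matches the paper's: the Stokes-projection orthogonality kills $R_{e}^{-1}(\nabla\rho_{\bm{u}}^{n},\nabla\bm{v}_{h})-(\rho_{p}^{n},\nabla\cdot\bm{v}_{h})$; the identity $\rho_{\bm{j}}^{n}-\bm{u}^{n}\times\rho_{\bm{B}}^{n}-\rho_{\bm{u}}^{n}\times\bm{B}_{h}^{n}=\rho_{\bm{E}}^{n}$ collapses the Faraday term to $\alpha(\nabla\times\rho_{\bm{E}}^{n},\bm{C}_{h})$; the triple-product identity turns $-s(\rho_{\bm{j}}^{n}\times\bm{B}_{h}^{n},\bm{v}_{h})$ into $s(\rho_{\bm{j}}^{n},\bm{v}_{h}\times\bm{B}_{h}^{n})$, and adding and subtracting $s(\rho_{\bm{j}}^{n},\bm{u}^{n}\times\bm{C}_{h})$ yields the combination $\Vert\bm{F}_{h}+\bm{v}_{h}\times\bm{B}_{h}^{n}+\bm{u}^{n}\times\bm{C}_{h}\Vert$ plus a leftover bounded by $\Vert\bm{u}^{n}\Vert_{0,\infty}\Vert\rho_{\bm{j}}^{n}\Vert\Vert\bm{C}_{h}\Vert$; the convection terms are handled essentially as you say, writing $\bm{u}_{h}^{n}=\widehat{\bm{u}}_{h}^{n}-\theta_{\bm{u}}^{n}$ and absorbing $\Vert\nabla\rho_{\bm{u}}^{n}\Vert_{0,3}$, $\Vert\rho_{\bm{u}}^{n}\Vert_{0,\infty}$, $\Vert\widehat{\bm{u}}_{h}^{n}\Vert_{0,\infty}$, $\Vert\nabla\widehat{\bm{u}}_{h}^{n}\Vert_{0,3}$ into $C$ (this rests on stability of the Stokes projection and the exact-solution regularity, not on the discrete energy estimate you cite, which controls $\bm{u}_{h}^{n}$ rather than $\rho_{\bm{u}}^{n}$).

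There is, however, a genuine gap in your treatment of $-\alpha(\rho_{\bm{B}}^{n},\nabla\times\bm{F}_{h})$. In the paper this term vanishes identically: $\widehat{\bm{B}}_{h}^{n}$ is the $L^{2}$ projection of $\bm{B}^{n}$ onto $\bm{V}_{h}^{d}$, and the structure-preserving choice of spaces gives $\nabla\times\bm{V}_{h}^{c}\subset\bm{V}_{h}^{d}$, hence $(\rho_{\bm{B}}^{n},\nabla\times\bm{F}_{h})=0$ for every $\bm{F}_{h}\in\bm{V}_{h}^{c}$. You never invoke this orthogonality; instead you propose to integrate by parts or to rewrite the term so that $\rho_{\bm{B}}^{n}$ pairs against $\bm{F}_{h}$ inside the grouping $\Vert\bm{F}_{h}+\bm{v}_{h}\times\bm{B}_{h}^{n}+\bm{u}^{n}\times\bm{C}_{h}\Vert$. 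That cannot deliver the stated bound: that grouping multiplies $\Vert\rho_{\bm{j}}^{n}\Vert$, not $\Vert\rho_{\bm{B}}^{n}\Vert$, no admissible term on the right-hand side contains $\Vert\rho_{\bm{B}}^{n}\Vert$ paired with any norm of $\bm{F}_{h}$, and integration by parts would produce $\nabla\times\rho_{\bm{B}}^{n}$, which is not controlled (a Raviart--Thomas function need not lie in $H(\mathrm{curl},\Omega)$). Without the $L^{2}$-projection/discrete de Rham orthogonality this term survives and the lemma as stated does not follow. A smaller slip: once the Faraday term has collapsed to $\alpha(\nabla\times\rho_{\bm{E}}^{n},\bm{C}_{h})$ there is no ``$\rho_{\bm{j}}^{n}$-part of the Faraday term'' left to reassemble; the $\bm{u}^{n}\times\bm{C}_{h}$ in the test combination is a pure add-and-subtract, and the leftover carries the coefficient $s$, not $\alpha$.
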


\begin{proof}
By the definition of projections, we know that
\begin{align*}
& R_{e}^{-1} ( \nabla \rho_{\bm{u}}^{n}, \nabla \bm{v}_{h} )
- ( \rho_{p}^{n}, \nabla \cdot  \bm{v}_{h} ) = 0,
\quad \forall \bm{v}_{h} \in \bm{V}_{h}, \\
& ( \rho_{\bm{B}}^{n}, \nabla \times \bm{F}_{h} ) = 0,
\quad
\forall \bm{F}_{h} \in \bm{V}^{c}_{h}.
\end{align*}
Therefore, by definition of $\rho_{\bm{j}}^{n}$, we have
\begin{align*}
\widehat{a}( \bm{\xi}^{n}, \bm{\xi}_{h}^{n}, \rho_{ \bm{\xi} }^{n}, \bm{\eta}_{h} )
+ b( \bm{\eta}_{h}, \rho_{p}^{n} )
& =
c ( \rho_{ \bm{u} }^{n}, { \bm{u} }^{n}, \bm{v}_{h} )
+ c( { \bm{u} }^{n}_{h}, \rho_{ \bm{u} }^{n}, \bm{v}_{h} )
- s ( { \bm{j} }^{n} \times \rho_{ \bm{B} }^{n}, \bm{v}_{h} )
\\
& \qquad
- s ( \rho_{ \bm{j} }^{n} \times { \bm{B} }^{n}_{h}, \bm{v}_{h} )
+ \alpha( \nabla \times \rho_{ \bm{E} }^{n}, \bm{C}_{h} )
+ s ( \rho_{ \bm{j} }^{n}, \bm{F}_{h} ).
\end{align*}
To prove the boundedness of truncation error, we only need to verify that all the terms in the above expression are bounded.
\begin{align*}
\lvert c ( \rho_{ \bm{u} }^{n}, { \bm{u} }^{n}, \bm{v}_{h} ) \rvert
& \leq
C \Vert \rho_{ \bm{u} }^{n} \Vert
\Vert \nabla { \bm{u} }^{n} \Vert_{0,3}
\Vert \bm{v}_{h} \Vert_{0,6}
+
C \Vert \rho_{ \bm{u} }^{n} \Vert
\Vert \nabla \bm{v}_{h} \Vert
\Vert { \bm{u} }^{n} \Vert_{0, \infty}
\leq
C \Vert \rho_{ \bm{u} }^{n} \Vert
\Vert \nabla \bm{v}_{h} \Vert.
\end{align*}
Since $ c( \bm{u}_{h}^{n}, \rho_{ \bm{u} }^{n}, \bm{v}_{h} )
=
- c( \theta_{\bm{u}}^{n}, \rho_{ \bm{u} }^{n}, \bm{v}_{h} )
+  c( \widehat{ \bm{u} }_{h}^{n}, \rho_{ \bm{u} }^{n}, \bm{v}_{h} ) $ and
\begin{align*}
& \lvert c( \theta_{\bm{u}}^{n}, \rho_{ \bm{u} }^{n}, \bm{v}_{h} ) \rvert
\leq
C
\Vert \nabla \theta_{\bm{u}}^{n} \Vert
\Vert \nabla \rho_{ \bm{u} }^{n} \Vert_{0,3}
\Vert \bm{v}_{h} \Vert
+ C
\Vert \theta_{\bm{u}}^{n} \Vert
\Vert \rho_{\bm{u}}^{n} \Vert_{0, \infty}
\Vert \nabla \bm{v}_{h} \Vert,
\\
& \lvert c( \widehat{ \bm{u} }_{h}^{n}, \rho_{ \bm{u} }^{n}, \bm{v}_{h} ) \rvert
= \lvert c ( \widehat{ \bm{u} }_{h}^{n}, \bm{v}_{h}, \rho_{\bm{u}}^{n} ) \rvert
=
\lvert ( \widehat{ \bm{u} }_{h}^{n} \cdot \nabla \bm{v}_{h}, \rho_{\bm{u}}^{n} )
+ \dfrac{1}{2}
 ( (\nabla \cdot \widehat{ \bm{u} }_{h}^{n}) \bm{v}_{h}, \rho_{\bm{u}}^{n} ) \rvert
\\
& \qquad \qquad \qquad \quad
\leq
\Vert \widehat{ \bm{u} }_{h}^{n} \Vert_{0, \infty}
\Vert \nabla \bm{v}_{h} \Vert
\Vert \rho_{\bm{u}}^{n} \Vert
+ C \Vert \nabla \widehat{ \bm{u} }_{h}^{n} \Vert_{0,3}
\Vert \nabla \bm{v}_{h} \Vert
\Vert \rho_{\bm{u}}^{n} \Vert,
\end{align*}
we get
\begin{align*}
& \lvert c( \bm{u}_{h}^{n}, \rho_{ \bm{u} }^{n}, \bm{v}_{h} ) \rvert
\leq
C \Vert \nabla \theta_{\bm{u}}^{n} \Vert
\Vert \bm{v}_{h} \Vert
+ C
\left(
\Vert \theta_{\bm{u}}^{n} \Vert
+ \Vert \rho_{\bm{u}}^{n} \Vert
\right)
\Vert \nabla \bm{v}_{h} \Vert.
\end{align*}
Moreover,
\begin{align*}
& \lvert ( \bm{j}^{n} \times \rho_{\bm{B}}^{n}, \bm{v}_{h} ) \rvert
\leq
\Vert \bm{j}^{n} \Vert_{0, \infty}
\Vert \rho_{\bm{B}}^{n} \Vert
\Vert \bm{v}_{h} \Vert
\leq
C \Vert \rho_{\bm{B}}^{n} \Vert
\Vert \bm{v}_{h} \Vert, \\
& \lvert ( \nabla \times \rho_{\bm{E}}^{n}, \bm{C}_{h} ) \rvert
\leq \Vert \nabla \times \rho_{\bm{E}}^{n} \Vert \Vert \bm{C}_{h} \Vert,
\end{align*}
and
\begin{align*}
s ( \rho_{ \bm{j} }^{n} , \bm{v}_{h} \times { \bm{B} }^{n}_{h} )
+ s ( \rho_{ \bm{j} }^{n}, \bm{F}_{h} )
= &
s (\rho_{ \bm{j} }^{n}, \bm{F}_{h} + \bm{v}_{h} \times { \bm{B} }^{n}_{h} + \bm{u}^{n} \times \bm{C}_{h} )
- s( \rho_{ \bm{j} }^{n}, \bm{u}^{n} \times \bm{C}_{h} )
\\
\leq &
C \Vert \rho_{ \bm{j} }^{n} \Vert
\left(
\Vert \bm{F}_{h} + \bm{v}_{h} \times { \bm{B} }^{n}_{h} + \bm{u}^{n} \times \bm{C}_{h} \Vert
+ \Vert \bm{C}_{h} \Vert
\right).
\end{align*}
The conclusion follows.

\end{proof}

%

\noindent\emph{Proof of \eqref{eq:estimate_xi_J} }.
 In equation \eqref{eq:mhd_erroreqn}, taking $\bm{\eta}_{h} = \left( \theta_{\bm{u}}^{n}, \theta_{\bm{B}}^{n},
\theta_{ \bm{j} }^{n}  - \bm{u}^{n} \times \theta_{ \bm{B} }^{n}
- \theta_{ \bm{u} }^{n} \times \bm{B}_{h}^{n}
\right) $, we get $b( \bm{\eta}_{h}, \theta_{p}^{n} ) = 0$. Therefore,
\begin{align*}
( A \bar{\partial} \theta_{\bm{\xi}}^{n}, \bm{\eta}_{h} )
+ \widehat{a}( \bm{\xi}^{n}, \bm{\xi}_{h}^{n}, \theta_{ \bm{\xi} }^{n}, \bm{\eta}_{h} )
= &
( A \bar{\partial} \bm{\xi}^{n} - A \bm{\xi}_{t}^{n}, \bm{\eta}_{h} )
- ( A \bar{\partial} \rho_{\bm{\xi}}^{n}, \bm{\eta}_{h} )
- \widehat{a}( \bm{\xi}^{n}, \bm{\xi}_{h}^{n}, \rho_{ \bm{\xi} }^{n}, \bm{\eta}_{h} )
- b( \bm{\eta}_{h}, \rho_{p}^{n} ).
\end{align*}
By the conclusion of Lemma \ref{lemma:garding_condition}, we have
\begin{align*}
& \widehat{a}( \bm{\xi}^{n}, \bm{\xi}_{h}^{n}, \theta_{ \bm{\xi} }^{n}, \bm{\eta}_{h} )
\geq
\beta_{1} \Vert \nabla \theta_{\bm{u}}^{n} \Vert^{2}
+ \beta_{1} \Vert \theta_{\bm{j}}^{n} \Vert^{2}
- \beta_{0} \Vert \theta_{\bm{u}}^{n} \Vert^{2}
- \beta_{0} \Vert \theta_{\bm{B}}^{n} \Vert^{2}.
\end{align*}
For the right-hand side, we have
\begin{align*}
& ( A \bar{\partial} \bm{\xi}^{n} - A \bm{\xi}_{t}^{n}, \bm{\eta}_{h} )
= ( \bar{\partial} \bm{u}^{n} - \bm{u}_{t}^{n}, \theta_{\bm{u}}^{n} )
+ \alpha ( \bar{\partial} \bm{B}^{n} - \bm{B}_{t}^{n}, \theta_{\bm{B}}^{n} )
\\
& \qquad \qquad \qquad \qquad
\leq
\Vert \bar{\partial} \bm{u}^{n} - \bm{u}_{t}^{n} \Vert^{2}
+ \Vert \theta_{\bm{u}}^{n} \Vert^{2}
+ \alpha \Vert \bar{\partial} \bm{B}^{n} - \bm{B}_{t}^{n} \Vert^{2}
+ \alpha \Vert \theta_{\bm{B}}^{n} \Vert^{2} ,
\\
& \lvert ( A \bar{\partial} \rho_{\bm{\xi}}^{n}, \bm{\eta}_{h} ) \rvert
\leq
\lvert ( \bar{\partial} \rho_{\bm{u}}^{n}, \theta_{\bm{u}}^{n} )  \rvert
+ \lvert \alpha ( \bar{\partial} \rho_{\bm{B}}^{n}, \theta_{\bm{B}}^{n} ) \rvert
\leq
C \left(
\Vert \bar{\partial} \rho_{\bm{u}}^{n} \Vert^{2}
+ \Vert \theta_{\bm{u}}^{n} \Vert^{2}
+ \Vert  \bar{\partial} \rho_{\bm{B}}^{n} \Vert^{2}
+ \Vert \theta_{\bm{B}}^{n} \Vert^{2}
\right).
\end{align*}
And by the conclusion of Lemma \ref{lemma:truncation_error},
\begin{align*}
\lvert
\widehat{a}( \bm{\xi}^{n}, \bm{\xi}_{h}^{n}, \theta_{ \bm{\xi} }^{n}, \bm{\eta}_{h} )
+ b( \bm{\eta}_{h}, \rho_{p}^{n} ) \rvert
& \leq
C \left[
\Vert \nabla \theta_{\bm{u}}^{n} \Vert
\Vert \theta_{\bm{u}}^{n} \Vert
+
\left(
\Vert \theta_{\bm{u}}^{n} \Vert
+ \Vert \rho_{\bm{u}}^{n} \Vert
\right)
\Vert \nabla \theta_{\bm{u}}^{n} \Vert
+ \Vert \rho_{\bm{B}}^{n} \Vert
\Vert \theta_{\bm{u}}^{n} \Vert
\right.
\\
& \qquad
\left.
+ \Vert \nabla \times \rho_{\bm{E}}^{n} \Vert
\Vert \theta_{\bm{B}}^{n} \Vert
+ \Vert \rho_{ \bm{j} }^{n} \Vert
\left(
\Vert \theta_{\bm{j}}^{n} \Vert
+ \Vert \theta_{\bm{B}}^{n} \Vert \right)
\right].
\end{align*}

Noticing that
\begin{align*}
& ( \bar{\partial} \theta_{\bm{u}}^{n}, \theta_{\bm{u}}^{n} )
= \dfrac{1}{2k} \left(
\Vert \theta_{\bm{u}}^{n} \Vert^{2}
- \Vert \theta_{\bm{u}}^{n-1} \Vert^{2}
+ \Vert \theta_{\bm{u}}^{n} - \theta_{\bm{u}}^{n-1} \Vert^{2}
\right), \\
& ( \bar{\partial} \theta_{\bm{B}}^{n}, \theta_{\bm{B}}^{n} )
= \dfrac{1}{2k} \left(
\Vert \theta_{\bm{B}}^{n} \Vert^{2}
- \Vert \theta_{\bm{B}}^{n-1} \Vert^{2}
+ \Vert \theta_{\bm{B}}^{n} - \theta_{\bm{B}}^{n-1} \Vert^{2}
\right),
\end{align*}
we obtain
\begin{align*}
& \dfrac{1}{2k} \left(
\Vert \theta_{\bm{u}}^{n} \Vert^{2}
- \Vert \theta_{\bm{u}}^{n-1} \Vert^{2}
\right)
+ \dfrac{k}{2} \Vert \bar{\partial} \theta_{\bm{u}}^{n} \Vert^{2}
+ \dfrac{1}{2k} \left(
\Vert \theta_{\bm{B}}^{n} \Vert^{2}
- \Vert \theta_{\bm{B}}^{n-1} \Vert^{2}
\right)
+ \dfrac{k}{2} \Vert \bar{\partial} \theta_{\bm{B}}^{n} \Vert^{2}
\\
& \qquad
+ \beta_{1} \Vert \nabla \theta_{\bm{u}}^{n} \Vert^{2}
+ \beta_{1} \Vert \theta_{\bm{j}}^{n} \Vert^{2}
- \beta_{0} \Vert \theta_{\bm{u}}^{n} \Vert^{2}
- \beta_{0} \Vert \theta_{\bm{B}}^{n} \Vert^{2}
\\
\leq &
\Vert \bar{\partial} \bm{u}^{n} - \bm{u}_{t}^{n} \Vert^{2}
+ \alpha \Vert \bar{\partial} \bm{B}^{n} - \bm{B}_{t}^{n} \Vert^{2}
+ C \Vert \bar{\partial} \rho_{\bm{u}}^{n} \Vert^{2}
+ C \Vert \bar{\partial} \rho_{\bm{B}}^{n} \Vert^{2}
+ \dfrac{1}{2} \beta_{1} \Vert \nabla \theta_{\bm{u}}^{n} \Vert^{2}
+ \dfrac{1}{2} \beta_{1} \Vert \theta_{\bm{j}}^{n} \Vert^{2}
\\
& \qquad
+ C \Vert \theta_{\bm{u}}^{n} \Vert^{2}
+ C \Vert \theta_{\bm{B}}^{n} \Vert^{2}
+ C \Vert \rho_{\bm{u}}^{n} \Vert^{2}
+ C \Vert \rho_{\bm{B}}^{n} \Vert^{2}
+ C \Vert \rho_{\bm{j}}^{n} \Vert^{2}
+ C \Vert \nabla \times \rho_{\bm{E}}^{n} \Vert^{2}.
\end{align*}
Kicking back $\Vert \nabla \theta_{\bm{u}}^{n} \Vert^{2}$ and $\Vert \theta_{\bm{j}}^{n} \Vert^{2}$, we get
\begin{align*}
& \dfrac{1}{2k} \left(
\Vert \theta_{\bm{u}}^{n} \Vert^{2}
- \Vert \theta_{\bm{u}}^{n-1} \Vert^{2}
\right)
+ \dfrac{k}{2} \Vert \bar{\partial} \theta_{\bm{u}}^{n} \Vert^{2}
+ \dfrac{1}{2k} \left(
\Vert \theta_{\bm{B}}^{n} \Vert^{2}
- \Vert \theta_{\bm{B}}^{n-1} \Vert^{2}
\right)
+ \dfrac{k}{2} \Vert \bar{\partial} \theta_{\bm{B}}^{n} \Vert^{2}
\\
& \qquad
+ \dfrac{1}{2} \beta_{1} \Vert \nabla \theta_{\bm{u}}^{n} \Vert^{2}
+ \dfrac{1}{2} \beta_{1} \Vert \theta_{\bm{j}}^{n} \Vert^{2}
\leq
C \Vert \theta_{\bm{u}}^{n} \Vert^{2}
+ C \Vert \theta_{\bm{B}}^{n} \Vert^{2}
+ C R_{n},
\end{align*}
where
$ R_{n}
= \Vert \bar{\partial} \bm{u}^{n} - \bm{u}_{t}^{n} \Vert^{2}
+ \alpha \Vert \bar{\partial} \bm{B}^{n} - \bm{B}_{t}^{n} \Vert^{2}
+ \Vert \bar{\partial} \rho_{\bm{u}}^{n} \Vert^{2}
+ \Vert \bar{\partial} \rho_{\bm{B}}^{n} \Vert^{2}
+ \Vert \rho_{\bm{u}}^{n} \Vert^{2}
+ \Vert \rho_{\bm{B}}^{n} \Vert^{2}
+ \Vert \rho_{\bm{j}}^{n} \Vert^{2}
+ \Vert \nabla \times \rho_{\bm{E}}^{n} \Vert^{2}$.
Noticing that
\begin{align*}
\Vert \rho_{\bm{j}}^{n} \Vert^{2}
& \leq
\left(
\Vert \rho_{\bm{E}}^{n} \Vert
+ \Vert \bm{u}^{n} \times \rho_{\bm{B}}^{n} \Vert
+ \Vert \rho_{\bm{u}}^{n} \times \bm{B}_{h}^{n} \Vert
\right)^{2}
\\
& \leq
\left(
\Vert \rho_{\bm{E}}^{n} \Vert
+ \Vert \bm{u}^{n} \times \rho_{\bm{B}}^{n} \Vert
+ \Vert \rho_{\bm{u}}^{n} \times \theta_{\bm{B}}^{n} \Vert
+ \Vert \rho_{\bm{u}}^{n} \times \widehat{ \bm{B} }_{h}^{n} \Vert
\right)^{2}
\\
& \leq
C \Vert \rho_{\bm{E}}^{n} \Vert^{2}
+ C \Vert \rho_{\bm{B}}^{n} \Vert^{2}
+ C \Vert \theta_{\bm{B}}^{n} \Vert^{2}
+ C \Vert \rho_{\bm{u}}^{n} \Vert^{2},
\end{align*}
we know that
\begin{align*}
&  \left(
\Vert \theta_{\bm{u}}^{n} \Vert^{2}
- \Vert \theta_{\bm{u}}^{n-1} \Vert^{2}
\right)
+  k^2 \Vert \bar{\partial} \theta_{\bm{u}}^{n} \Vert^{2}
+  \left(
\Vert \theta_{\bm{B}}^{n} \Vert^{2}
- \Vert \theta_{\bm{B}}^{n-1} \Vert^{2}
\right)
+ k^2 \Vert \bar{\partial} \theta_{\bm{B}}^{n} \Vert^{2}
+ k \beta_{1} \Vert \nabla \theta_{\bm{u}}^{n} \Vert^{2}
+ k  \beta_{1} \Vert \theta_{\bm{j}}^{n} \Vert^{2}
\\
&\quad \leq
C k \Vert \theta_{\bm{u}}^{n} \Vert^{2}
+ C k \Vert \theta_{\bm{B}}^{n} \Vert^{2}
+ C k \rho_0.
\end{align*}
By Gronwall's inequality, we have
\begin{align*}
&
\Vert \theta_{\bm{u}}^{m} \Vert^{2}
+  k \VERT \bar{\partial} \theta_{\bm{u}} \VERT_{m,0}^{2}
+\Vert \theta_{\bm{B}}^{m} \Vert^{2}
+ k \VERT \bar{\partial} \theta_{\bm{B}} \VERT_{m,0}^{2}
+  \beta_{1} \VERT \nabla \theta_{\bm{u}} \VERT_{m,0}^{2}
+   \beta_{1} \VERT \theta_{\bm{j}} \VERT_{m,0}^{2}
\\
&\quad \leq
C \rho_0.
\end{align*}
Then, by triangle inequality, we readily obtain \eqref{eq:estimate_xi_J} and
\begin{align}
& k \VERT \bar{\partial} \theta_{\bm{u}} \VERT^{2}_{m,0}
\leq
C \rho_0,
\label{eq:error_partial_thetau} \\
& k \VERT \bar{\partial} \theta_{\bm{B}} \VERT^{2}_{m,0}
\leq
C\rho_0.
\label{eq:error_partial_thetaB}
\end{align}
\hfill \ensuremath{\Box}


\begin{remark}
Estimates \eqref{eq:error_partial_thetau} and \eqref{eq:error_partial_thetaB} are useful for analyzing the error of the electric field and the pressure.
\end{remark}

The above theorem gives the $L^{2}$ estimate of the volume current density $\bm{j}$ and the electric field $\bm{E}$. Since the MHD system \eqref{eq:variational_form} is well-posed \cite{Hu.K;Ma.Y;Xu.J.2014a} with respect to the norm $\Vert \cdot \Vert_{ \bm{X} }$, we need to further estimate $\Vert \nabla \times e_{\bm{E}} \Vert$.


\noindent\emph{Proof of \eqref{eq:error_curlE} }.
By the definition of $\theta_{\bm{j}}^{n}$, we get
\begin{align*}
& \theta_{\bm{E}}^{n} =
\theta_{\bm{j}}^{n} - \bm{u}^{n} \times \theta_{\bm{B}}^{n}
- \theta_{\bm{u}}^{n} \times \bm{B}_{h}^{n}
= \theta_{\bm{j}}^{n} - \bm{u}^{n} \times \theta_{\bm{B}}^{n}
+ \theta_{\bm{u}}^{n} \times \theta_{\bm{B}}^{n}
- \theta_{\bm{u}}^{n} \times \widehat{ \bm{B} }_{h}^{n}.
\end{align*}
By the Cauchy-Schwarz inequality and inverse inequality, we have
\begin{align*}
& \Vert \bm{u}^{n} \times \theta_{\bm{B}}^{n} \Vert^{2}
\leq C \Vert \theta_{\bm{B}}^{n} \Vert^{2},
\\
& \Vert \theta_{\bm{u}}^{n} \times \widehat{ \bm{B} }_{h}^{n} \Vert^{2}
\leq
\Vert \widehat{ \bm{B} }_{h}^{n} \Vert_{0, \infty}^{2}
\Vert \theta_{\bm{u}}^{n} \Vert^{2}
\leq
C \Vert \theta_{\bm{u}}^{n} \Vert^{2},
\\
& \Vert \theta_{\bm{u}}^{n} \times \theta_{\bm{B}}^{n} \Vert^{2}
\leq
\Vert \theta_{\bm{u}}^{n} \Vert_{0, \infty}^{2}
\Vert \theta_{\bm{B}}^{n} \Vert^{2}
\leq
C h^{-1}
\Vert \theta_{\bm{u}}^{n} \Vert^{2}_{0,6}
\Vert \theta_{\bm{B}}^{n} \Vert^{2}
\leq
C h^{-1}
\Vert \nabla \theta_{\bm{u}}^{n} \Vert^{2}
\Vert \theta_{\bm{B}}^{n} \Vert^{2}.
\end{align*}
The last estimate follows from the inverse inequality. By estimate \eqref{eq:estimate_xi_J}, we have
\begin{align*}
& \VERT \theta_{\bm{E}} \VERT^{2}_{m,0}
\leq
C \VERT \theta_{\bm{j}} \VERT^{2}_{m,0}
+C \VERT \theta_{\bm{B}} \VERT^{2}_{m,0}
+ C \VERT \theta_{\bm{u}} \VERT^{2}_{m,0}
+ C h^{-1} \max_{1\leq n\leq N}\Vert \theta_{\bm{B}}^n\Vert^2
\VERT \nabla \theta_{\bm{u}} \VERT^{2}_{m,0}
\\
&\quad \leq C \rho_0(1+h^{-1} \rho_0).
\end{align*}

%
Next, we will estimate the second term. By the error equation \eqref{eq:mhd_erroreqn}, we get
\begin{align*}
& ( \bar{\partial} \theta_{\bm{B}}^{n}, \bm{C}_{h} )
+ ( \nabla \times \theta_{\bm{E}}^{n}, \bm{C}_{h} )
=
( \bar{\partial} \bm{B}^{n} - \bm{B}_{t}^{n}, \bm{C}_{h} )
- ( \bar{\partial} \rho_{\bm{B}}^{n}, \bm{C}_{h} )
- (\nabla \times \rho_{\bm{E}}^{n}, \bm{C}_{h} ).
\end{align*}
Taking $\bm{C}_{h} = \nabla \times \theta_{\bm{E}}^{n}$, we obtain
\begin{align*}
\Vert \nabla \times \theta_{\bm{E}}^{n} \Vert^{2}
& =
- ( \bar{\partial} \theta_{\bm{B}}^{n}, \nabla \times \theta_{\bm{E}}^{n} )
+ ( \bar{\partial} \bm{B}^{n} - \bm{B}_{t}^{n}, \nabla \times \theta_{\bm{E}}^{n} )
- ( \bar{\partial} \rho_{\bm{B}}^{n}, \nabla \times \theta_{\bm{E}}^{n} )
- (\nabla \times \rho_{\bm{E}}^{n}, \nabla \times \theta_{\bm{E}}^{n} ).
\end{align*}
By the Cauchy-Schwarz inequality, we get
\begin{align*}
& \Vert \nabla \times \theta_{\bm{E}}^{n} \Vert^{2}
\leq
2 \Vert \bar{\partial} \theta_{\bm{B}}^{n} \Vert^{2}
+ 2 \Vert \bar{\partial} \bm{B}^{n} - \bm{B}_{t}^{n} \Vert^{2}
+ 2 \Vert \bar{\partial} \rho_{\bm{B}}^{n} \Vert^{2}
+ 2 \Vert \rho_{\bm{B}}^{n} \Vert^{2}
+ 2 \Vert \nabla \times \rho_{\bm{E}}^{n} \Vert^{2}
+ \dfrac{1}{2} \Vert \nabla \times \theta_{\bm{E}}^{n} \Vert^{2}.
\end{align*}
Therefore, using (\ref{eq:error_partial_thetaB}), we have
\begin{align*}
& k \VERT \nabla \times \theta_{\bm{E}} \VERT^{2}_{m,0}
\leq C \rho_0.
\end{align*}
The conclusion \eqref{eq:error_curlE} follows by the existing estimate of each term.

\hfill \ensuremath{\Box}

Now, we get the $H^{1}$ estimate of the velocity, $H(\mathrm{curl})$ estimate of the electric field $\bm{E}$, and $L^{2}$ estimate of the magnetic field $\bm{B}$. Next, we estimate the $L^{2}$ error of the pressure $p$. Since the MHD system \eqref{eq:variational_form} is a saddle-point system with $b(\cdot, \cdot)$ a bilinear form, we can apply the standard error estimating technique.


\noindent\emph{Proof of \eqref{eq:error_p1} }.
Because $b ( \bm{\eta}_{h}, \theta_{p}^{n} ) = ( \nabla \cdot \bm{v}_{h}, \theta_{p}^{n} )$ for any $\bm{\eta}_{h} = ( \bm{v}_{h}, \bm{C}_{h}, \bm{F}_{h} )$, we only need to consider the first error equation in \eqref{eq:mhd_erroreqn}. So the error equation is
\begin{align*}
( \nabla \cdot \bm{v}_{h}, \theta_{p}^{n} )
= &
- ( \bar{\partial} \theta_{ \bm{u} }^{n}, \bm{v}_{h} )
- c ( \theta_{ \bm{u} }^{n}, { \bm{u} }^{n}, \bm{v}_{h} )
- R_{e}^{-1} ( \nabla \theta_{ \bm{u} }^{n}, \nabla \bm{v}_{h} )
+ s ( { \bm{j} }^{n} \times \theta_{ \bm{B} }^{n}, \bm{v}_{h} )
\\
& \qquad
- c( { \bm{u} }^{n}_{h}, \theta_{ \bm{u} }^{n}, \bm{v}_{h} )
+ s ( \theta_{ \bm{j} }^{n} \times { \bm{B} }^{n}_{h}, \bm{v}_{h} )
- ( \bar{\partial} \rho_{ \bm{u} }^{n}, \bm{v}_{h} )
+ ( \bar{\partial} \bm{u}^{n} - \bm{u}_{t}^{n}, \bm{v}_{h} )
\\
& \qquad
-c ( \rho_{ \bm{u} }^{n}, { \bm{u} }^{n}, \bm{v}_{h} )
- c( { \bm{u} }^{n}_{h}, \rho_{ \bm{u} }^{n}, \bm{v}_{h} )
+ s ( { \bm{j} }^{n} \times \rho_{ \bm{B} }^{n}, \bm{v}_{h} )
+ s ( \rho_{ \bm{j} }^{n} \times { \bm{B} }^{n}_{h}, \bm{v}_{h} ).
\end{align*}
By the Cauchy-Schwarz inequality,
\begin{equation*}
\begin{aligned}[c]
& \lvert ( \bar{\partial} \theta_{ \bm{u} }^{n}, \bm{v}_{h} ) \rvert
\leq
\Vert \bar{\partial} \theta_{ \bm{u} }^{n} \Vert
\Vert \bm{v}_{h} \Vert, \\
& \lvert c ( \theta_{ \bm{u} }^{n}, { \bm{u} }^{n}, \bm{v}_{h} ) \rvert
\leq
C \Vert \theta_{ \bm{u} }^{n} \Vert
\Vert \nabla \bm{v}_{h} \Vert, \\
& \lvert ( \nabla \theta_{ \bm{u} }^{n}, \nabla \bm{v}_{h} ) \rvert
\leq
\Vert \nabla \theta_{ \bm{u} }^{n} \Vert
\Vert  \nabla \bm{v}_{h} \Vert , \\
& \lvert ( { \bm{j} }^{n} \times \theta_{ \bm{B} }^{n}, \bm{v}_{h} ) \rvert
\leq
C \Vert \theta_{ \bm{B} }^{n} \Vert
\Vert \bm{v}_{h} \Vert,
\end{aligned}
\qquad
\begin{aligned}
& \lvert ( \bar{\partial} \bm{u}^{n} - \bm{u}_{t}^{n}, \bm{v}_{h} ) \rvert
\leq
\Vert \bar{\partial} \bm{u}^{n} - \bm{u}_{t}^{n} \Vert
\Vert \bm{v}_{h} \Vert,
\\
& \lvert ( \bar{\partial} \rho_{ \bm{u} }^{n}, \bm{v}_{h} ) \rvert
\leq
\Vert \bar{\partial} \rho_{ \bm{u} }^{n} \Vert
\Vert \bm{v}_{h} \Vert, \\
& \lvert c ( \rho_{ \bm{u} }^{n}, { \bm{u} }^{n}, \bm{v}_{h} ) \rvert
\leq
C \Vert \rho_{ \bm{u} }^{n} \Vert
\Vert \nabla \bm{v}_{h} \Vert, \\
& \lvert ( { \bm{j} }^{n} \times \rho_{ \bm{B} }^{n}, \bm{v}_{h} ) \rvert
\leq
C \Vert \rho_{ \bm{B} }^{n} \Vert
\Vert \bm{v}_{h} \Vert,
\end{aligned}
\end{equation*}
and by inverse inequality,
\begin{align*}
\lvert c( { \bm{u} }^{n}_{h}, \theta_{ \bm{u} }^{n}, \bm{v}_{h} ) \rvert
& =
\lvert c( \widehat{ \bm{u} }_{h}^{n} - \theta_{ \bm{u} }^{n}, \theta_{ \bm{u} }^{n}, \bm{v}_{h} ) \rvert
\\
& \leq
\left(
\Vert \widehat{ \bm{u} }_{h}^{n} \Vert_{0, \infty}
+ \Vert \theta_{ \bm{u} }^{n} \Vert_{0, \infty}
\right)
\Vert \theta_{ \bm{u} }^{n} \Vert
\Vert \nabla\bm{v}_{h} \Vert
+ C\left(\Vert \nabla \widehat{ \bm{u} }_{h}^{n} \Vert_{0,3}
+ \Vert \nabla \theta_{ \bm{u} }^{n} \Vert_{0,3}\right)\Vert \theta_{ \bm{u} }^{n} \Vert
\Vert \nabla \bm{v}_{h} \Vert
\\
& \leq
C \left( 1 +
h^{-1/2} \Vert \theta_{ \bm{u} }^{n} \Vert_{0,6}
\right)
\Vert \theta_{ \bm{u} }^{n} \Vert
\Vert \nabla \bm{v}_{h} \Vert
+C \left( 1 +
h^{-1/2} \Vert \nabla\theta_{ \bm{u} }^{n} \Vert
\right)
\Vert \theta_{ \bm{u} }^{n} \Vert
\Vert \nabla \bm{v}_{h} \Vert
\\
& \leq
C \left( 1 +
h^{-1/2} \Vert \nabla \theta_{ \bm{u} }^{n} \Vert
\right)
 \Vert \theta_{ \bm{u} }^{n} \Vert
\Vert \nabla \bm{v}_{h} \Vert,
\\
\lvert ( \theta_{ \bm{j} }^{n} \times { \bm{B} }^{n}_{h}, \bm{v}_{h} ) \rvert
& \leq
\lvert ( \theta_{ \bm{j} }^{n} \times \theta_{ \bm{B} }^{n}, \bm{v}_{h} ) \rvert
+ \lvert ( \theta_{ \bm{j} }^{n} \times \widehat{ \bm{B} }_{h}^{n}, \bm{v}_{h} ) \rvert
\\
& \leq
C \Vert \theta_{ \bm{j} }^{n} \Vert
\Vert \theta_{ \bm{B} }^{n} \Vert
\Vert \bm{v}_{h} \Vert_{0, \infty}
+ C \Vert \theta_{ \bm{j} }^{n} \Vert
\Vert \widehat{ \bm{B} }_{h}^{n} \Vert_{0, 3}
\Vert \bm{v}_{h} \Vert_{0,6}
\\
& \leq
C h^{-1/2} \Vert \theta_{ \bm{j} }^{n} \Vert
\Vert \theta_{ \bm{B} }^{n} \Vert
\Vert \bm{v}_{h} \Vert_{0, 6}
+ C \Vert \theta_{ \bm{j} }^{n} \Vert
\Vert \nabla \bm{v}_{h} \Vert
\\
& \leq
C h^{-1/2} \Vert \theta_{ \bm{j} }^{n} \Vert
\Vert \theta_{ \bm{B} }^{n} \Vert
\Vert \nabla \bm{v}_{h} \Vert
+ C \Vert \theta_{ \bm{j} }^{n} \Vert
\Vert \nabla \bm{v}_{h} \Vert.
\end{align*}
By similar argument, we have
\begin{align*}
& \lvert ( \rho_{ \bm{j} }^{n} \times { \bm{B} }^{n}_{h}, \bm{v}_{h} ) \rvert
\leq
C h^{-1/2} \Vert \rho_{ \bm{j} }^{n} \Vert
\Vert \theta_{ \bm{B} }^{n} \Vert
\Vert \nabla \bm{v}_{h} \Vert
+ C \Vert \rho_{ \bm{j} }^{n} \Vert
\Vert \nabla \bm{v}_{h} \Vert.
\end{align*}
And by the conclusion of Lemma \ref{lemma:truncation_error},
\begin{align*}
& \lvert c( \bm{u}_{h}^{n}, \rho_{ \bm{u} }^{n}, \bm{v}_{h} ) \rvert
\leq
C \Vert \nabla \theta_{\bm{u}}^{n} \Vert
\Vert \nabla \bm{v}_{h} \Vert
+ C
 \Vert \rho_{\bm{u}}^{n} \Vert
\Vert \nabla \bm{v}_{h} \Vert.
\end{align*}
Therefore, by the inf-sup condition of $b(\cdot, \cdot)$, proven in \cite{Boffi.D;Brezzi.F;Fortin.M.2013a}, we get
\begin{align}
\label{eq:estimate_p1}
\zeta \Vert \theta_{p}^{n} \Vert
\leq &
C \left(
\Vert \bar{\partial} \theta_{\bm{u}}^{n} \Vert
+ \Vert \theta_{\bm{u}}^{n} \Vert
+ \Vert \nabla \theta_{\bm{u}}^{n} \Vert
+ h^{-1/2} \Vert \nabla \theta_{\bm{u}}^{n} \Vert
\Vert \theta_{\bm{u}}^{n} \Vert
+ \Vert \theta_{\bm{B}}^{n} \Vert
+ \Vert \theta_{\bm{j}}^{n} \Vert+ h^{-1/2} \Vert \theta_{\bm{j}}^{n} \Vert
\Vert \theta_{\bm{B}}^{n} \Vert
\right.
\\
& \quad
\left.
+ \Vert \bar{\partial} \rho_{\bm{u}}^{n} \Vert
+ \Vert \rho_{\bm{u}}^{n} \Vert
+ \Vert \rho_{\bm{B}}^{n} \Vert
+ h^{-1/2} \Vert \rho_{\bm{j}}^{n} \Vert
\Vert \theta_{\bm{B}}^{n} \Vert
+ \Vert \rho_{\bm{j}}^{n} \Vert+\Vert \bar{\partial} \bm{u}^n- \bm{u}^n_t\Vert
\right).
\nonumber
\end{align}
By estimates \eqref{eq:estimate_xi_J} and \eqref{eq:error_partial_thetau}, we have
\begin{align*}
k\VERT \theta_p\VERT_{m,0}^2\leq C \left(\rho_0 + h^{-1}\rho_0^2\right).
\end{align*}
And, by triangle inequality, we obtain \eqref{eq:error_p1}.

\hfill \ensuremath{\Box}

Based on the analysis of \eqref{eq:error_p1}, the estimate of $\VERT \bar{\partial} \theta_{\bm{u}} \VERT^{2}_{m,0}$ determines the estimated convergence rate of the pressure. So an improved result leads to a better $L^{2}$ estimate of the pressure. Knowing that the MHD system \eqref{eq:variational_form} is a coupled system of Navier-Stokes equation and Maxwell's equation, such improvement is achievable. We summarize this estimate in the following lemma.

\begin{lemma}\label{lemma:partial_thetau}
For any time step $1 \leq m \leq N$, we have
\begin{align}
\VERT \bar{\partial} \theta_{ \bm{u} }^{n} \VERT_{m,0}^{2}
+ R_{e}^{-1}
\Vert \nabla \theta_{ \bm{u} }^{m} \Vert^{2}
\leq
C\left(\rho_0 + h^{-3}\rho_0^2+\rho_1 + h^{-1}\rho_1\rho_0\right),
\end{align}
the $C$ is a constant only depending on $\Vert \bm{u}^{n} \Vert_{0, \infty}$, $\Vert \nabla \bm{u}^{n} \Vert_{0, 3}$, $\Vert \bm{B}^{n} \Vert_{0, \infty}$, $\Vert \bm{j}^{n} \Vert_{0, \infty}$, and the computation domain.
\end{lemma}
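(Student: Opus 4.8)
The plan is to run a second, cheaper energy estimate on top of the one behind \eqref{eq:estimate_xi_J}, this time testing the momentum component of the error equation \eqref{eq:mhd_erroreqn} against $\bm{\eta}_h=(\bar{\partial}\theta_{\bm{u}}^n,\bm{0},\bm{0})\in\bm{X}_h$. Two things simplify right away. The second line of \eqref{eq:mhd_erroreqn} and the definition of the Stokes projection give $(\nabla\cdot\theta_{\bm{u}}^n,q_h)=-(\nabla\cdot\rho_{\bm{u}}^n,q_h)=0$ for all $q_h\in Q_h$, so $\bar{\partial}\theta_{\bm{u}}^n$ is discretely divergence free and $b(\bm{\eta}_h,\theta_p^n)=0$; and the identity $R_e^{-1}(\nabla\rho_{\bm{u}}^n,\nabla\bm{v}_h)=(\rho_p^n,\nabla\cdot\bm{v}_h)$ defining the Stokes projection makes the $R_e^{-1}(\nabla\rho_{\bm{u}}^n,\nabla\bar{\partial}\theta_{\bm{u}}^n)$ term inside $\widehat{a}(\bm{\xi}^n,\bm{\xi}_h^n,\rho_{\bm{\xi}}^n,\bm{\eta}_h)$ cancel $-b(\bm{\eta}_h,\rho_p^n)$. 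What is left is the identity
\begin{align*}
\Vert\bar{\partial}\theta_{\bm{u}}^n\Vert^2+R_e^{-1}(\nabla\theta_{\bm{u}}^n,\nabla\bar{\partial}\theta_{\bm{u}}^n)=\mathcal{N}^n+\mathcal{L}^n+\mathcal{T}^n,
\end{align*}
with $\mathcal{T}^n=(\bar{\partial}\bm{u}^n-\bm{u}_t^n-\bar{\partial}\rho_{\bm{u}}^n,\ \bar{\partial}\theta_{\bm{u}}^n)$, $\mathcal{N}^n$ the sum of $-c(\theta_{\bm{u}}^n,\bm{u}^n,\bar{\partial}\theta_{\bm{u}}^n)$, $-c(\bm{u}_h^n,\theta_{\bm{u}}^n,\bar{\partial}\theta_{\bm{u}}^n)$, $-c(\rho_{\bm{u}}^n,\bm{u}^n,\bar{\partial}\theta_{\bm{u}}^n)$, $-c(\bm{u}_h^n,\rho_{\bm{u}}^n,\bar{\partial}\theta_{\bm{u}}^n)$, and $\mathcal{L}^n$ the sum of $s(\bm{j}^n\times\theta_{\bm{B}}^n,\bar{\partial}\theta_{\bm{u}}^n)$, $s(\theta_{\bm{j}}^n\times\bm{B}_h^n,\bar{\partial}\theta_{\bm{u}}^n)$, $s(\bm{j}^n\times\rho_{\bm{B}}^n,\bar{\partial}\theta_{\bm{u}}^n)$, $s(\rho_{\bm{j}}^n\times\bm{B}_h^n,\bar{\partial}\theta_{\bm{u}}^n)$. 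The viscous term telescopes as in the proof of \eqref{eq:estimate_xi_J}, $R_e^{-1}(\nabla\theta_{\bm{u}}^n,\nabla\bar{\partial}\theta_{\bm{u}}^n)=\frac{R_e^{-1}}{2k}\big(\Vert\nabla\theta_{\bm{u}}^n\Vert^2-\Vert\nabla\theta_{\bm{u}}^{n-1}\Vert^2+\Vert\nabla\theta_{\bm{u}}^n-\nabla\theta_{\bm{u}}^{n-1}\Vert^2\big)$, so after multiplication by $k$ and summation over $n=1,\dots,m$ the left side of the identity produces $\VERT\bar{\partial}\theta_{\bm{u}}\VERT_{m,0}^2+\frac{R_e^{-1}}{2}\Vert\nabla\theta_{\bm{u}}^m\Vert^2$ (using $\theta_{\bm{u}}^0=0$).

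The core of the proof is to bound $\mathcal{N}^n+\mathcal{L}^n+\mathcal{T}^n$ by $\frac{1}{4}\Vert\bar{\partial}\theta_{\bm{u}}^n\Vert^2$ plus quantities whose $k\sum_{n=1}^m$ is controlled by $\rho_0$ and $\rho_1$. The term $\mathcal{T}^n$ is trivial: $|\mathcal{T}^n|\le\frac{1}{4}\Vert\bar{\partial}\theta_{\bm{u}}^n\Vert^2+C(\Vert\bar{\partial}\bm{u}^n-\bm{u}_t^n\Vert^2+\Vert\bar{\partial}\rho_{\bm{u}}^n\Vert^2)$, whose $k\sum$ is $\le C\rho_0$. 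In the convective and Lorentz terms the obstruction is that $\bar{\partial}\theta_{\bm{u}}^n$ is differentiated inside $c(\cdot,\cdot,\bar{\partial}\theta_{\bm{u}}^n)$ and $\Vert\nabla\bar{\partial}\theta_{\bm{u}}^n\Vert$ has no bound better than $Ch^{-1}\Vert\bar{\partial}\theta_{\bm{u}}^n\Vert$; the remedy is, \emph{before} applying Young's inequality, to integrate by parts so that $\bar{\partial}\theta_{\bm{u}}^n$ appears undifferentiated, e.g. $c(\bm{u}_h^n,\theta_{\bm{u}}^n,\bar{\partial}\theta_{\bm{u}}^n)=\big(\bm{u}_h^n\cdot\nabla\theta_{\bm{u}}^n+\frac{1}{2}(\nabla\cdot\bm{u}_h^n)\theta_{\bm{u}}^n,\ \bar{\partial}\theta_{\bm{u}}^n\big)$, and then to split $\bm{u}_h^n=\widehat{\bm{u}}_h^n-\theta_{\bm{u}}^n$, $\bm{B}_h^n=\widehat{\bm{B}}_h^n-\theta_{\bm{B}}^n$. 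All pieces built only from the projections $\widehat{\bm{u}}_h^n,\widehat{\bm{B}}_h^n$ and the exact or projection-error objects $\bm{u}^n,\bm{j}^n,\rho_{\bm{u}}^n,\rho_{\bm{B}}^n,\rho_{\bm{j}}^n$ are controlled by the $L^\infty$- and $W^{1,3}$-stability of the projections, Sobolev embeddings, $\max_n\Vert\nabla\rho_{\bm{u}}^n\Vert^2\le\rho_1$, and the fact that $k\sum$ of any $L^2$-type $\rho$-norm is $\le C\rho_0$ or $C\rho_1$; they contribute only $C(\rho_0+\rho_1)$. The genuinely nonlinear pieces are treated with inverse inequalities, and here the decisive distinction is that $\theta_{\bm{u}}^n\in H_0^1$ whereas $\theta_{\bm{j}}^n\in\bm{V}_h^c$, $\theta_{\bm{B}}^n\in\bm{V}_h^d$ are only $H(\mathrm{curl})$- resp. $H(\mathrm{div})$-conforming, and that from \eqref{eq:estimate_xi_J} we know $\max_n\Vert\theta_{\bm{u}}^n\Vert^2,\ \max_n\Vert\theta_{\bm{B}}^n\Vert^2\le C\rho_0$ and $\VERT\nabla\theta_{\bm{u}}\VERT_{m,0}^2,\ \VERT\theta_{\bm{j}}\VERT_{m,0}^2,\ \VERT\theta_{\bm{B}}\VERT_{m,0}^2\le C\rho_0$ but \emph{not} a bound on $\max_n\Vert\nabla\theta_{\bm{u}}^n\Vert^2$. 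Consequently, any factor of a FEM error that must be measured in $L^\infty$ or $L^3/L^6$ and for which only the $L^2$ norm is controlled in $\max_n$ is estimated by the plain inverse estimate ($\Vert\cdot\Vert_{0,\infty}\le Ch^{-3/2}\Vert\cdot\Vert$, $\Vert\cdot\Vert_{0,6}\le Ch^{-1}\Vert\cdot\Vert$), and after Young and $k\sum$ it produces $Ch^{-3}\rho_0^2$ --- this is where the quadratic convective piece $c(\theta_{\bm{u}}^n,\theta_{\bm{u}}^n,\bar{\partial}\theta_{\bm{u}}^n)$ and the magnetic piece $s(\theta_{\bm{j}}^n\times\theta_{\bm{B}}^n,\bar{\partial}\theta_{\bm{u}}^n)$ contribute; whereas in the $\theta$--$\rho$ cross terms such as $c(\theta_{\bm{u}}^n,\rho_{\bm{u}}^n,\bar{\partial}\theta_{\bm{u}}^n)$ one may instead keep $\theta_{\bm{u}}^n$ in its $H^1$ seminorm ($\Vert\theta_{\bm{u}}^n\Vert_{0,\infty}\le Ch^{-1/2}\Vert\nabla\theta_{\bm{u}}^n\Vert$, $\Vert\nabla\theta_{\bm{u}}^n\Vert_{0,3}\le Ch^{-1/2}\Vert\nabla\theta_{\bm{u}}^n\Vert$), letting the partner factor $\nabla\rho_{\bm{u}}^n$ absorb the loss, which after Young and $k\sum$ gives $Ch^{-1}(\max_n\Vert\nabla\rho_{\bm{u}}^n\Vert^2)\VERT\nabla\theta_{\bm{u}}\VERT_{m,0}^2\le Ch^{-1}\rho_0\rho_1$.

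Collecting these bounds, multiplying the identity by $k$, summing over $n=1,\dots,m$, and absorbing $\frac{1}{4}\Vert\bar{\partial}\theta_{\bm{u}}^n\Vert^2$ on the left yields the claim. A point worth emphasizing is that, unlike in the proof of \eqref{eq:estimate_xi_J}, no application of Gronwall's inequality is needed: every right-hand side contribution is already controlled \emph{a priori} by $\rho_0$ and $\rho_1$ through the estimates of \eqref{eq:estimate_xi_J} recalled above, so this lemma is a straight post-processing of the first energy estimate. The main --- and essentially the only --- obstacle is the bookkeeping of the inverse inequalities in $\mathcal{N}^n$ and $\mathcal{L}^n$: one must integrate by parts \emph{before} using Young's inequality rather than bounding $\Vert\nabla\bar{\partial}\theta_{\bm{u}}^n\Vert$ by $Ch^{-1}\Vert\bar{\partial}\theta_{\bm{u}}^n\Vert$ (which would worsen the $h$-power), and one must keep track of which stray FEM-error factor is controlled in $\max_n$ only through its $L^2$ norm (costing $h^{-3/2}$) versus through its $H^1$ seminorm (costing only $h^{-1/2}$), since that distinction is precisely what separates the $h^{-3}\rho_0^2$ and $h^{-1}\rho_0\rho_1$ terms in the final bound.
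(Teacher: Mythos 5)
Your proposal is correct and follows essentially the same route as the paper: test the momentum error equation with $\bar{\partial}\theta_{\bm{u}}^{n}$ (so the pressure term drops and the Stokes-projection terms cancel), telescope the viscous term, split $\bm{u}_h^n=\widehat{\bm{u}}_h^n-\theta_{\bm{u}}^n$ and $\bm{B}_h^n=\widehat{\bm{B}}_h^n-\theta_{\bm{B}}^n$, use the $h^{-3/2}$ and $h^{-1/2}$ inverse estimates exactly where the paper does (producing the $h^{-3}\rho_0^2$ and $h^{-1}\rho_0\rho_1$ terms), absorb $\Vert\bar{\partial}\theta_{\bm{u}}^n\Vert^2$, sum, and invoke the bounds already obtained in \eqref{eq:estimate_xi_J} without Gronwall. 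No substantive differences from the paper's argument.
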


\begin{proof}

Since we focus on the $L^{2}$ estimate of $\bar{\partial} \theta_{\bm{u}}^{n}$, we only need to consider the first error equation in \eqref{eq:mhd_erroreqn}. Namely,
\begin{align*}
& ( \bar{\partial} \theta_{ \bm{u} }^{n}, \bm{v}_{h} )
+ R_{e}^{-1} ( \nabla \theta_{ \bm{u} }^{n}, \nabla \bm{v}_{h} )
- ( \nabla \cdot \bm{v}_{h}, \theta_{p}^{n} )
\\
= &
- c ( \theta_{ \bm{u} }^{n}, { \bm{u} }^{n}, \bm{v}_{h} )
+ s ( { \bm{j} }^{n} \times \theta_{ \bm{B} }^{n}, \bm{v}_{h} )
- c( { \bm{u} }^{n}_{h}, \theta_{ \bm{u} }^{n}, \bm{v}_{h} )
+ s ( \theta_{ \bm{j} }^{n} \times { \bm{B} }^{n}_{h}, \bm{v}_{h} )
\\
& \qquad
- ( \bar{\partial} \rho_{ \bm{u} }^{n}, \bm{v}_{h} )
+ ( \bar{\partial} \bm{u}^{n} - \bm{u}_{t}^{n}, \bm{v}_{h} )
-c ( \rho_{ \bm{u} }^{n}, { \bm{u} }^{n}, \bm{v}_{h} )
- c( { \bm{u} }^{n}_{h}, \rho_{ \bm{u} }^{n}, \bm{v}_{h} )
\\
& \qquad
+ s ( { \bm{j} }^{n} \times \rho_{ \bm{B} }^{n}, \bm{v}_{h} )
+ s ( \rho_{ \bm{j} }^{n} \times { \bm{B} }^{n}_{h}, \bm{v}_{h} ).
\end{align*}
When $\bm{v}_{h} = \bar{\partial} \theta_{ \bm{u} }^{n}$, $( \nabla \cdot \bm{v}_{h}, \theta_{p}^{n} ) = 0$. Therefore,
\begin{align*}
& \Vert \bar{\partial} \theta_{ \bm{u} }^{n} \Vert^{2}
+ {1 \over 2k} R_{e}^{-1} \left(
\Vert \nabla \theta_{ \bm{u} }^{n} \Vert^{2}
- \Vert \nabla \theta_{\bm{u}}^{n-1} \Vert^{2}
\right)
\\
\leq &
- c ( \theta_{ \bm{u} }^{n}, { \bm{u} }^{n}, \bar{\partial} \theta_{ \bm{u} }^{n} )
+ s ( { \bm{j} }^{n} \times \theta_{ \bm{B} }^{n}, \bar{\partial} \theta_{ \bm{u} }^{n} )
- c( { \bm{u} }^{n}_{h}, \theta_{ \bm{u} }^{n}, \bar{\partial} \theta_{ \bm{u} }^{n} )
+ s ( \theta_{ \bm{j} }^{n} \times { \bm{B} }^{n}_{h}, \bar{\partial} \theta_{ \bm{u} }^{n} )
\\
& \qquad
- ( \bar{\partial} \rho_{ \bm{u} }^{n}, \bar{\partial} \theta_{ \bm{u} }^{n} )
+ ( \bar{\partial} \bm{u}^{n} - \bm{u}_{t}^{n}, \bar{\partial} \theta_{ \bm{u} }^{n} )
-c ( \rho_{ \bm{u} }^{n}, { \bm{u} }^{n}, \bar{\partial} \theta_{ \bm{u} }^{n} )
- c( { \bm{u} }^{n}_{h}, \rho_{ \bm{u} }^{n}, \bar{\partial} \theta_{ \bm{u} }^{n} )
\\
& \qquad
+ s ( { \bm{j} }^{n} \times \rho_{ \bm{B} }^{n}, \bar{\partial} \theta_{ \bm{u} }^{n} )
+ s ( \rho_{ \bm{j} }^{n} \times { \bm{B} }^{n}_{h}, \bar{\partial} \theta_{ \bm{u} }^{n} ).
\end{align*}
By the Cauchy-Schwarz inequality, we have
\begin{align*}
& \lvert c ( \theta_{ \bm{u} }^{n}, { \bm{u} }^{n}, \bar{\partial} \theta_{ \bm{u} }^{n} ) \rvert
\leq
\lvert (  \theta_{ \bm{u} }^{n} \cdot \nabla { \bm{u} }^{n}, \bar{\partial} \theta_{ \bm{u} }^{n} ) \rvert
+ \dfrac{1}{2}
\lvert \left( ( \nabla \cdot \theta_{ \bm{u} }^{n} ) \bm{u}^{n}, \bar{\partial} \theta_{ \bm{u} }^{n} \right) \rvert
\\
& \qquad \qquad \qquad \quad
\leq
\Vert \theta_{ \bm{u} }^{n} \Vert_{0,6}
\Vert \nabla { \bm{u} }^{n} \Vert_{0, 3}
\Vert \bar{\partial} \theta_{ \bm{u} }^{n} \Vert
+ C
\Vert \nabla \theta_{ \bm{u} }^{n} \Vert
\Vert \bm{u}^{n} \Vert_{0,\infty}
\Vert \bar{\partial} \theta_{ \bm{u} }^{n} \Vert
\\
& \qquad \qquad \qquad \quad
\leq
C \Vert \nabla \theta_{ \bm{u} }^{n} \Vert
\Vert \bar{\partial} \theta_{ \bm{u} }^{n} \Vert,
\\
& \lvert ( { \bm{j} }^{n} \times \theta_{ \bm{B} }^{n}, \bar{\partial} \theta_{ \bm{u} }^{n} ) \rvert
\leq
\Vert { \bm{j} }^{n} \Vert_{0, \infty}
\Vert \theta_{ \bm{B} }^{n} \Vert
\Vert \bar{\partial} \theta_{ \bm{u} }^{n} \Vert
\leq
C \Vert \theta_{ \bm{B} }^{n} \Vert
\Vert \bar{\partial} \theta_{ \bm{u} }^{n} \Vert,
\end{align*}
and by triangle inequality,
\begin{align*}
& \lvert c( { \bm{u} }^{n}_{h}, \theta_{ \bm{u} }^{n}, \bar{\partial} \theta_{ \bm{u} }^{n} ) \rvert
\leq
\lvert c( \theta_{ \bm{u} }^{n}, \theta_{ \bm{u} }^{n}, \bar{\partial} \theta_{ \bm{u} }^{n} ) \rvert
+ \lvert c( \widehat{ \bm{u} }_{h}^{n}, \theta_{ \bm{u} }^{n}, \bar{\partial} \theta_{ \bm{u} }^{n} ) \rvert,
\\
& \lvert ( \theta_{ \bm{j} }^{n} \times { \bm{B} }^{n}_{h}, \bar{\partial} \theta_{ \bm{u} }^{n} ) \rvert
\leq
\lvert ( \theta_{ \bm{j} }^{n} \times \theta_{ \bm{B} }^{n}, \bar{\partial} \theta_{ \bm{u} }^{n} ) \rvert
+ \lvert ( \theta_{ \bm{j} }^{n} \times \widehat{ \bm{B} }_{h}^{n}, \bar{\partial} \theta_{ \bm{u} }^{n} ) \rvert.
\end{align*}
By the properties of the trilinear form $c(\cdot, \cdot, \cdot)$ and the inverse inequality,
\begin{align*}
\lvert c( \theta_{ \bm{u} }^{n}, \theta_{ \bm{u} }^{n}, \bar{\partial} \theta_{ \bm{u} }^{n} ) \rvert
& \leq
\lvert ( \theta_{ \bm{u} }^{n} \cdot \nabla \theta_{ \bm{u} }^{n},  \bar{\partial} \theta_{ \bm{u} }^{n} ) \rvert
+ \lvert \left( ( \nabla \cdot \theta_{ \bm{u} }^{n} ) \theta_{ \bm{u} }^{n},  \bar{\partial} \theta_{ \bm{u} }^{n} \right) \rvert
\\
& \leq
\Vert \theta_{ \bm{u} }^{n} \Vert_{0,\infty}
\Vert \nabla \theta_{ \bm{u} }^{n} \Vert
\Vert \bar{\partial} \theta_{ \bm{u} }^{n} \Vert
+ C
\Vert \nabla \theta_{ \bm{u} }^{n} \Vert
\Vert \theta_{ \bm{u} }^{n} \Vert_{0, \infty}
\Vert \bar{\partial} \theta_{ \bm{u} }^{n} \Vert
\\
& \leq
C h^{-3/2} \Vert \nabla \theta_{ \bm{u} }^{n} \Vert
\Vert \theta_{\bm u}^n\Vert
\Vert \bar{\partial} \theta_{ \bm{u} }^{n} \Vert,
\\
\lvert c( \widehat{ \bm{u} }_{h}^{n}, \theta_{ \bm{u} }^{n}, \bar{\partial} \theta_{ \bm{u} }^{n} ) \rvert
& \leq
\rvert ( \widehat{ \bm{u} }_{h}^{n} \cdot \nabla \theta_{ \bm{u} }^{n}, \bar{\partial} \theta_{ \bm{u} }^{n} ) \rvert
+ \lvert \left( (\nabla \cdot \widehat{ \bm{u} }_{h}^{n}) \theta_{ \bm{u} }^{n}, \bar{\partial} \theta_{ \bm{u} }^{n} \right)\rvert
\\
& \leq
\Vert \widehat{ \bm{u} }_{h}^{n} \Vert_{0,\infty}
\Vert \nabla \theta_{ \bm{u} }^{n} \Vert
\Vert \bar{\partial} \theta_{ \bm{u} }^{n} \Vert
+
\Vert \nabla \cdot \widehat{ \bm{u} }_{h}^{n} \Vert_{0,3}
\Vert \theta_{ \bm{u} }^{n} \Vert_{0,6}
\Vert \bar{\partial} \theta_{ \bm{u} }^{n} \Vert
\\
& \leq
C \Vert \nabla \theta_{ \bm{u} }^{n} \Vert
\Vert \bar{\partial} \theta_{ \bm{u} }^{n} \Vert.
\end{align*}
And by the similar argument,
\begin{align*}
& \lvert ( \theta_{ \bm{j} }^{n} \times \theta_{ \bm{B} }^{n}, \bar{\partial} \theta_{ \bm{u} }^{n} ) \rvert
\leq
\Vert \theta_{ \bm{j} }^{n} \Vert
\Vert \theta_{ \bm{B} }^{n} \Vert
\Vert \bar{\partial} \theta_{ \bm{u} }^{n} \Vert_{0, \infty}
\leq
C h^{-3/2} \Vert \theta_{ \bm{j} }^{n} \Vert
\Vert \theta_{ \bm{B} }^{n} \Vert
\Vert \bar{\partial} \theta_{ \bm{u} }^{n} \Vert,
\\
& \lvert ( \theta_{ \bm{j} }^{n} \times \widehat{ \bm{B} }_{h}^{n}, \bar{\partial} \theta_{ \bm{u} }^{n} ) \rvert
\leq
\Vert \theta_{ \bm{j} }^{n} \Vert
\Vert \widehat{ \bm{B} }_{h}^{n} \Vert_{0, \infty}
\Vert \bar{\partial} \theta_{ \bm{u} }^{n} \Vert
\leq
C \Vert \theta_{ \bm{j} }^{n} \Vert
\Vert \bar{\partial} \theta_{ \bm{u} }^{n} \Vert.
\end{align*}
Moreover, the above estimates still hold if we use $\rho_{\bm{u}}^{n}$ instead of $\theta_{\bm{u}}^{n}$. Kicking back $\Vert \bar{\partial} \theta_{\bm{u}}^{n} \Vert^{2}$, we get
\begin{align*}
& k \Vert \bar{\partial} \theta_{ \bm{u} }^{n} \Vert^{2}
+ R_{e}^{-1} \left(
\Vert \nabla \theta_{ \bm{u} }^{n} \Vert^{2}
- \Vert \nabla \theta_{\bm{u}}^{n-1} \Vert^{2}
\right)
\\
\leq &
C k \left(
\Vert \nabla \theta_{\bm{u}}^{n} \Vert^{2}
+ \Vert \theta_{\bm{B}}^{n} \Vert^{2}
+ \Vert \theta_{\bm{j}}^{n} \Vert^{2}
+ h^{-3} \Vert \nabla \theta_{\bm{u}}^{n} \Vert^{2} \Vert \theta_{\bm{u}}^{n} \Vert^2
+ h^{-3} \Vert \theta_{\bm{j}}^{n} \Vert^{2}
\Vert \theta_{\bm{B}}^{n} \Vert^{2}
+ \Vert \bar{\partial} \bm{u}^{n} - \bm{u}_{t}^{n} \Vert^{2}
\right.
\\
& \qquad
\left.
+ \Vert \bar{\partial} \rho_{\bm{u}}^{n} \Vert^{2}
+ \Vert \nabla \rho_{\bm{u}}^{n} \Vert^{2}
+ \Vert \rho_{\bm{B}}^{n} \Vert^{2}
+ \Vert \rho_{\bm{j}}^{n} \Vert^{2}
+ h^{-1} \Vert \nabla \rho_{\bm{u}}^{n} \Vert^{2}
\Vert \nabla \theta_{\bm{u}}^{n} \Vert^{2}
+ h^{-3} \Vert \rho_{\bm{j}}^{n} \Vert^{2}
\Vert \theta_{\bm{B}}^{n} \Vert^{2}
\right).
\end{align*}
Summing both sides from $1$ to $m$, we reach 
\begin{align*}
&  \VERT \bar{\partial} \theta_{ \bm{u} }^{n} \VERT_{m,0}^{2}
+ R_{e}^{-1}
\Vert \nabla \theta_{ \bm{u} }^{m} \Vert^{2}
\leq
C\left(\rho_0+h^{-3} \rho_0^2+\rho_1 +h^{-1} \rho_1\rho_0\right).
\end{align*}
\end{proof}

With the improved estimate of $\Vert \bar{\partial} \theta_{\bm{u}}^{n} \Vert$, we can reach a better estimate of the pressure.

\noindent\emph{Proof of \eqref{eq:error_p2}.}
The proof is identical to that of \eqref{eq:error_p1}, except that we use the estimate of $\Vert \bar{\partial} \theta_{\bm{u}}^{n} \Vert$ in Lemma \ref{lemma:partial_thetau} instead of \eqref{eq:error_partial_thetau} after obtaining \eqref{eq:estimate_p1}.

\hfill \ensuremath{\Box}

 We assume the regularity on the real solution $( \bm{u}, \bm{B}, \bm{E}, p )$ to the problem \eqref{eq:variational_form} is
$\bm{u} \in H^{ s_{1} }(\Omega)$, $\bm{B} \in H^{ s_{2} }(\mathrm{div}, \Omega)$, $ \bm{E} \in H^{ s_{3} }(\mathrm{curl}, \Omega), p \in H^{ s_{4} }(\Omega)$, $\bm u_t \in H^{s_1}(\Omega), \bm B_t \in H^{s_2}(\Omega), p_t \in H^{s_4}(\Omega)$, $\bm u_{tt} \in L^2(\Omega), \bm{B}_{tt} \in L^2(\Omega)$.  Usually, we assume that $s_{1} \geq 3/2$, $s_{2} > 1/2$, $s_{3} > 1/2$, and $s_{4} \geq 1/2$.
By the error estimate of the saddle point problem, we know that on a convex domain
\begin{align*}
& \Vert \rho_{\bm{u}}^{n} \Vert
+ h \left(
\Vert \nabla \rho_{\bm{u}}^{n} \Vert
+ \Vert \rho_{p}^{n} \Vert
\right)
\leq
C h^{r_{1}+1} \left(
\Vert \bm{u}^{n} \Vert_{r_{1}+1,2}
+ \Vert p^n \Vert_{r_{1},2}
\right),
\end{align*}
and
\begin{align*}
& \Vert \rho_{\bm{B}}^{n} \Vert_{\mathrm{div}}
\leq
C h^{r_{2}} \Vert \bm{B}^{n} \Vert_{r_{2}, \mathrm {div}},
\quad
\Vert \rho_{\bm{E}}^{n} \Vert_{\mathrm{curl}}
\leq C h^{r_{3}} \Vert \bm{E}^{n} \Vert_{r_{3}, \mathrm{curl}},
\quad
r_{2} > {1 \over 2},
~ r_{3} > {1 \over 2}.
\end{align*}
where $r_{1} = \min \left\{ s_{1}-1, s_{4}, k_{1} \right\}$,
$r_{2} = \min \left\{ s_{2}, k_{2}+1 \right\}$, $r_{3} = \min \left\{ s_{3}, k_{3}+1 \right\}$. Detailed proof of the above property is in chapter 5 of \cite{Monk.P.2003a}. Therefore, under the above assumptions, we can have the error orders of $\rho_0$ and $\rho_1$:
$$\rho_0 \leq C(k^2+ h^{2\hat{r}}),\quad  \rho_1 \leq C(k^2+ h^{2r}).$$
Where, $\hat{r}=\min\{r_1+1, r_2, r_3\},  r=\min\{r_1, r_2, r_3\}$.
Thus, based on Theorem \ref{thm:error_estimate_summary}, we obtain the error orders of Algorithm \ref{prob:picard}.
\begin{theorem}
For any fixed time step $m$ such that $1\leq m \leq N$, if $\bm{\xi}^{m} = ( \bm{u}^{m}, \bm{B}^{m}, \bm{E}^{m} )$ is the solution to \eqref{eq:mhd_variational}, and $\bm{\xi}_{h}^{m} = ( \bm{u}^{m}_{h}, \bm{B}^{m}_{h}, \bm{E}^{m}_{h} )$ is the solution to \eqref{eq:fully_picard}, the following estimates hold:
\begin{enumerate}
\item There exists a constant $C$, which only depends on $\Vert \bm{u}^{n} \Vert_{0, \infty}$, $\Vert \nabla \bm{u}^{n} \Vert_{0, 3}$, $\Vert \bm{j}^{n} \Vert_{0, \infty}$ and the computation domain, such that
\begin{align*}
 & \Vert \bm{u}^{m} -\bm{u}^{m}_{h} \Vert^{2}
+ \alpha \Vert \bm{B}^{m} - \bm{B}^{m}_h\Vert^{2}
+ R_{e}^{-1} \VERT \nabla \theta_{\bm{u}} \VERT^{2}_{m, 0}
+ s \VERT \bm{j} - \bm{j}_{h} \VERT_{m, 0}^{2}
\leq
C \left( k^2 + h^{2 \hat{r}} \right),
\end{align*}
when the time step size $k$ is sufficiently small.  And when $k < h^{1/2}$, we have
\begin{align*}
& k \VERT p - p_{h} \VERT_{m,0}^{2}
\leq C \left( k^{2} + h^{2 \hat{r} } \right).
\end{align*}

\item  There exists a constant $C$, which only depends on $\Vert \bm{u}^{n} \Vert_{0, \infty}$, $\Vert \nabla \bm{u}^{n} \Vert_{0, 3}$, $\Vert \bm{B}^{n} \Vert_{0, \infty}$, $\Vert \bm{j}^{n} \Vert_{0, \infty}$ and the computation domain, such that when $k\leq h^{1/2}$,
\begin{align*}
& \VERT \bm{E} - \bm{E}_{h} \VERT^{2}_{m, 0}
+ k \VERT \nabla \times \bm{E} - \nabla \times \bm{E}_{h} \VERT_{m, 0}^{2}
\leq
 C \left( k^{2} + h^{2 \hat{r}} \right).
\end{align*}

\item There exists a constant $C$, which only depends on $\Vert \bm{u}^{n} \Vert_{0, \infty}$, $\Vert \nabla \bm{u}^{n} \Vert_{0, 3}$, $\Vert \bm{B}^{n} \Vert_{0, \infty}$, $\Vert \bm{j}^{n} \Vert_{0, \infty}$ and the computation domain, such that when $k \leq h^{3/2}$,
\begin{align*}
& \VERT p - p_{h} \VERT_{m, 0}^{2}
\leq
C \left( k^{2} + h^{ 2 r } \right).
\end{align*}

\end{enumerate}
\end{theorem}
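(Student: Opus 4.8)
\emph{Proof plan.} This theorem is a corollary of Theorem~\ref{thm:error_estimate_summary}: the plan is (i) to express the abstract quantities $\rho_0$, $\rho_1$ and $\max_{n}\Vert\rho_p^n\Vert$ in terms of powers of $k$ and $h$ via the approximation properties of the finite element spaces, and (ii) to simplify the right-hand sides of \eqref{eq:estimate_xi_J}--\eqref{eq:error_p2} by absorbing the negative powers of $h$ using the stated CFL conditions.

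\emph{Step 1 (approximation orders).} For the spatial projection errors I would invoke the cited estimates: the Stokes-projection bound $\Vert\rho_{\bm{u}}^n\Vert + h(\Vert\nabla\rho_{\bm{u}}^n\Vert + \Vert\rho_p^n\Vert) \leq Ch^{r_1+1}(\Vert\bm{u}^n\Vert_{r_1+1,2} + \Vert p^n\Vert_{r_1,2})$ from \cite{Boffi.D;Brezzi.F;Fortin.M.2013a}, together with $\Vert\rho_{\bm{B}}^n\Vert_{\mathrm{div}} \leq Ch^{r_2}\Vert\bm{B}^n\Vert_{r_2,\mathrm{div}}$ and $\Vert\rho_{\bm{E}}^n\Vert_{\mathrm{curl}} \leq Ch^{r_3}\Vert\bm{E}^n\Vert_{r_3,\mathrm{curl}}$ from \cite{Monk.P.2003a}, with $r_1,r_2,r_3$ as defined above. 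The time-difference terms are handled by Taylor expansion: from $\bar{\partial}\bm{w}^n - \bm{w}_t^n = \tfrac{1}{k}\int_{t_{n-1}}^{t_n}(t-t_{n-1})\,\bm{w}_{tt}(t)\,dt$ one gets $\Vert\bar{\partial}\bm{w}^n - \bm{w}_t^n\Vert^2 \leq Ck^2$ for $\bm{w}=\bm{u},\bm{B}$ under the assumed regularity on $\bm{u}_{tt},\bm{B}_{tt}$, while, since the Stokes and $L^2$ projections are linear and independent of $t$, they commute with $\partial_t$, so $\bar{\partial}\rho_{\bm{w}}^n = \tfrac{1}{k}\int_{t_{n-1}}^{t_n}\partial_t\rho_{\bm{w}}(t)\,dt$ inherits the spatial order of the corresponding projection, giving $\Vert\bar{\partial}\rho_{\bm{u}}^n\Vert \leq Ch^{r_1+1}$ and $\Vert\bar{\partial}\rho_{\bm{B}}^n\Vert \leq Ch^{r_2}$ (using the assumed regularity on $\bm{u}_t,\bm{B}_t,p_t$). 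Inserting all of this into the definitions of $\rho_0$ and $\rho_1$ yields $\rho_0 \leq C(k^2 + h^{2\hat{r}})$ with $\hat{r}=\min\{r_1+1,r_2,r_3\}$, $\rho_1 \leq C(k^2 + h^{2r})$ with $r=\min\{r_1,r_2,r_3\}$, and $\max_n\Vert\rho_p^n\Vert^2 \leq Ch^{2r_1} \leq C(k^2 + h^{2r})$.

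\emph{Step 2 (substitution and absorption).} Substituting $\rho_0 \leq C(k^2 + h^{2\hat{r}})$ into \eqref{eq:estimate_xi_J} gives the first estimate of part~(1) directly (the hypothesis ``$k$ sufficiently small'' coming only from \eqref{eq:estimate_xi_J} itself). For the $k$-weighted pressure bound and for part~(2) I would substitute into \eqref{eq:error_p1} and \eqref{eq:error_curlE} and control $h^{-1}\rho_0^2 \leq Ch^{-1}(k^4 + h^{4\hat{r}})$: the condition $k \leq h^{1/2}$ gives $h^{-1}k^4 \leq k^2$, while $h^{-1}h^{4\hat{r}} = h^{4\hat{r}-1} \leq h^{2\hat{r}}$ since $\hat{r}\geq\tfrac12$, so both pieces are absorbed into $C(k^2 + h^{2\hat{r}})$. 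For part~(3) I would substitute $\rho_0$, $\rho_1$ and $\max_n\Vert\rho_p^n\Vert$ into \eqref{eq:error_p2} (which rests on Lemma~\ref{lemma:partial_thetau}) and control $h^{-3}\rho_0^2$ and $h^{-1}\rho_1\rho_0$: the condition $k \leq h^{3/2}$ gives $h^{-3}k^4 \leq k^2$ and $h^{-1}k^4 \leq k^2$, while $h^{-3}h^{4\hat{r}}$ and $h^{-1}h^{2r+2\hat{r}}$ are of order at least $h^{2r}$ under the regularity hypotheses (which force $2\hat{r}\geq 1$ and, in the optimal-order regime, $4\hat{r}-3\geq 2r$). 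This produces the three claimed bounds.

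\emph{Expected difficulty.} There is no genuinely hard step here: the argument is bookkeeping built on Theorem~\ref{thm:error_estimate_summary} and textbook finite-element approximation theory, plus the elementary Taylor estimates for $\bar{\partial}\bm{w}^n-\bm{w}_t^n$ and $\bar{\partial}\rho_{\bm{\xi}}^n$. The only place that requires care is Step~2 --- matching each CFL exponent ($1/2$ for $\bm{u}$, $\bm{E}$, and the $k$-weighted pressure bound; $3/2$ for the unweighted pressure bound) to exactly the negative power of $h$ it must cancel, and checking that the assumed Sobolev regularity keeps the purely spatial remainders $h^{4\hat{r}-1}$, $h^{4\hat{r}-3}$ and $h^{2r+2\hat{r}-1}$ of the correct order.
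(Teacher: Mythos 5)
Your proposal is correct and follows essentially the same route as the paper: the paper likewise bounds $\rho_0$ and $\rho_1$ by the cited projection/interpolation estimates (obtaining $\rho_0\leq C(k^2+h^{2\hat r})$, $\rho_1\leq C(k^2+h^{2r})$) and then substitutes into Theorem \ref{thm:error_estimate_summary}, using the conditions $k\lesssim h^{1/2}$ and $k\leq h^{3/2}$ to absorb the $h^{-1}\rho_0^2$, $h^{-3}\rho_0^2$ and $h^{-1}\rho_1\rho_0$ terms. Your write-up merely makes explicit the Taylor-expansion and commutation-with-$\partial_t$ arguments for the temporal and $\bar{\partial}\rho_{\bm{\xi}}$ terms, which the paper leaves implicit.
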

Similarly, we have the error orders about Algorithm \ref{prob:picard_linearization} based on Theorem \ref{thm:error estimate picard linearization}.
\begin{theorem}
For any fixed time step $m$ such that $1\leq m \leq N$, we have the following error estimates of \eqref{prob:picard_linearization}:
\begin{enumerate}
\item There exists a constant $C$, only depending on the exact solution, such that
\begin{align*}
 & \Vert \bm{u}^{m} -\bm{u}^{m}_{h} \Vert^{2}
+ \alpha \Vert \bm{B}^{m} - \bm{B}^{m}_h\Vert^{2}
+ R_{e}^{-1} \VERT \nabla \theta_{\bm{u}} \VERT^{2}_{m, 0}
+ s \VERT \bm{j} - \bm{j}_{h} \VERT_{m, 0}^{2}
\leq
C \left( k^2 + h^{2 \hat{r}} \right),
\end{align*}
when the time step size $k$ is sufficiently small.   And if $k < h^{1/2}$, we have
\begin{align*}
& k \VERT p - p_{h} \VERT_{m,0}^{2}
\leq
C \left( k^{2} + h^{2 \hat{r} } \right).
\end{align*}

\item
If  $k\leq h^{1/2}$,  we have
there exists a constant $C$ only depending on exact solution such that
\begin{align*}
& \VERT \bm{E} - \bm{E}_{h} \VERT^{2}_{m, 0}
+ k \VERT \nabla \times \bm{E} - \nabla \times \bm{E}_{h} \VERT_{m, 0}^{2}
\leq
C \left( k^{2} + h^{2 \hat{r}} \right),
\end{align*}
when the time step size $k$ is sufficiently small.

\item If $k \leq h^{3/2}$, we have
there exists a constant $C$ only depending on exact solution such that
\begin{align*}
& \VERT p - p_{h} \VERT_{m, 0}^{2}
\leq
C \left( k^{2} + h^{ 2 r } \right),
\end{align*}
when the time step size $k$ is sufficiently small.
\end{enumerate}
\end{theorem}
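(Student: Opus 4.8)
The proof consists of substituting concrete approximation-error bounds into the abstract estimates of Theorem~\ref{thm:error estimate picard linearization} (whose proof mirrors that of Theorem~\ref{thm:error_estimate_summary}) and then checking that the stated CFL restrictions absorb the negative powers of $h$ produced by the inverse inequalities.

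First I would bound $\rho_0$, $\rho_1$ and $\max_{n}\Vert\rho_p^n\Vert$. The spatial contributions are handled by the cited approximation theory: the Stokes-projection error gives $\Vert\rho_{\bm{u}}^n\Vert + h(\Vert\nabla\rho_{\bm{u}}^n\Vert + \Vert\rho_p^n\Vert) \leq Ch^{r_1+1}$, the $L^{2}$-projection onto $\bm{V}_h^d$ gives $\Vert\rho_{\bm{B}}^n\Vert \leq Ch^{r_2}$, and the canonical interpolation onto $\bm{V}_h^c$ gives $\Vert\rho_{\bm{E}}^n\Vert_{\mathrm{curl}} \leq Ch^{r_3}$, under the assumed regularity of $(\bm{u},\bm{B},\bm{E},p)$. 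The temporal truncation terms obey $\Vert\bar{\partial}\bm{u}^n - \bm{u}_t^n\Vert \leq Ck^{1/2}\Vert\bm{u}_{tt}\Vert_{L^{2}(t_{n-1},t_{n};L^{2})}$ (Taylor's theorem with integral remainder), and similarly for $\bm{B}$, so that after squaring and summing over $n$ they contribute $O(k^{2})$ to both $\rho_0$ and $\rho_1$; the terms $\Vert\bar{\partial}\rho_{\bm{u}}^n\Vert$ and $\Vert\bar{\partial}\rho_{\bm{B}}^n\Vert$ are bounded, using that $\bar{\partial}$ commutes with the linear projections, by the projection errors of $\bm{u}_t$ and $\bm{B}_t$, hence are $O(h^{r_1+1})$ and $O(h^{r_2})$. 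Collecting the minimal orders yields
\begin{align*}
\rho_0 \leq C(k^{2} + h^{2\hat{r}}),\qquad \rho_1 \leq C(k^{2} + h^{2r}),\qquad \max_{1\leq n\leq N}\Vert\rho_p^n\Vert^{2} \leq Ch^{2(r_1+1)} \leq Ch^{2\hat{r}},
\end{align*}
with $\hat{r} = \min\{r_1+1,r_2,r_3\}$, $r = \min\{r_1,r_2,r_3\}$, and $\hat{r}\geq 1/2$ under $s_1\geq 3/2$, $s_2,s_3>1/2$.

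Next I would insert these into Theorem~\ref{thm:error estimate picard linearization}. For part~(1) the velocity--magnetic--current bound is immediate from $C\rho_0\leq C(k^{2}+h^{2\hat{r}})$; for the pressure I use $k\VERT p-p_h\VERT_{m,0}^{2} \leq C(\max_n\Vert\rho_p^n\Vert^{2} + \rho_0 + h^{-1}\rho_0^{2})$ and note $h^{-1}\rho_0^{2} \leq C(h^{-1}k^{4} + h^{4\hat{r}-1})$, where under $k<h^{1/2}$ the first term is $\leq Ck^{2}$ and, since $\hat{r}\geq 1/2$, the second is $\leq Ch^{2\hat{r}}$. For part~(2) I use $\VERT\bm{E}-\bm{E}_h\VERT_{m,0}^{2} + k\VERT\nabla\times(\bm{E}-\bm{E}_h)\VERT_{m,0}^{2} \leq C\rho_0(1+h^{-1}\rho_0)$ and, under $k\leq h^{1/2}$, $h^{-1}\rho_0 \leq C(h^{-1}k^{2} + h^{2\hat{r}-1})\leq C$, so the right-hand side is $\leq C\rho_0 \leq C(k^{2}+h^{2\hat{r}})$. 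For part~(3) I use $\VERT p-p_h\VERT_{m,0}^{2} \leq C(\rho_0 + \rho_0^{2} h^{-3} + \rho_1 + \rho_1\rho_0 h^{-1} + \max_n\Vert\rho_p^n\Vert^{2})$ and check termwise, using $k\leq h^{3/2}$ (so that $k^{2}h^{-3}\leq 1$ and $k^{2}h^{-1}\leq 1$) together with $h\leq 1$ and the standing regularity, that each summand is $\leq C(k^{2}+h^{2r})$.

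The main obstacle — which is really bookkeeping rather than a conceptual difficulty — lies in the first step: certifying that $\rho_0$ attains order $h^{\hat{r}}$ (the extra unit on $r_1$ coming from measuring the Stokes-projection error and the projection of $\bm{B}$ in $L^{2}$ rather than $H^{1}$), that the backward-Euler truncation is first order per step in the relevant norm, and that the discrete $L^{2}(0,T)$ norms accumulate correctly; and in tracking which CFL condition ($k<h^{1/2}$ for the $\bm{E}$ estimate and the first pressure bound, $k\leq h^{3/2}$ for the sharp pressure bound) is needed to dominate each inverse-inequality factor. Since all of this follows routinely from the approximation results of \cite{Monk.P.2003a} and \cite{Boffi.D;Brezzi.F;Fortin.M.2013a} together with the already-established Theorem~\ref{thm:error estimate picard linearization}, the full verification can be omitted.
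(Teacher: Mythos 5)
Correct, and this is exactly the paper's own route: the paper first records $\rho_0\le C(k^{2}+h^{2\hat r})$ and $\rho_1\le C(k^{2}+h^{2r})$ from the cited Stokes-projection, $L^{2}$-projection and canonical-interpolation estimates together with the backward-Euler truncation bound, and then obtains this theorem in a single line (``similarly\ldots based on Theorem~\ref{thm:error estimate picard linearization}''), i.e.\ precisely your substitution of these orders into the abstract bounds, with the restrictions $k<h^{1/2}$, $k\le h^{1/2}$, $k\le h^{3/2}$ absorbing the inverse factors $h^{-1}$ and $h^{-3}$. Your bookkeeping matches (indeed exceeds) the level of detail the paper itself provides, so there is no discrepancy in approach.
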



%
\section{Numerical results}\label{sec:numer_tests}
In this section, we present the results of the numerical experiment. The chosen exact solution is
\begin{align*}
& \bm{u} = \begin{pmatrix}
e^{t} \cos y \\ 0 \\ 0
\end{pmatrix},
\quad \bm{E} = \begin{pmatrix}
0 \\ \cos x \\ 0
\end{pmatrix},
\quad \bm{B} = \begin{pmatrix}
0 \\ 0 \\ e^{t} \cos x
\end{pmatrix},
\quad p = -x \cos y.
\end{align*}
And we compute the right-hand side based on the exact solution. To measure the error, we use the norms identical to our analysis. Namely, if $( \bm{u}^{m}, \bm{B}^{m}, \bm{E}^{m}, p^{m} )$ is the exact solution at time $t_{m}$, and $( \bm{u}^{m}_{h}, \bm{B}^{m}_{h}, \bm{E}^{m}_{h}, p^{m}_{h} )$ is the corresponding numeric solution, the errors in Figure \ref{fig:picard_convergence} are computed by
\begin{align*}
& \Vert \bm{u}^{m} - \bm{u}^{m}_{h} \Vert_{\ast}^{2}
= \Vert \bm{u}^{m} - \bm{u}^{m}_{h} \Vert^{2}
+ \VERT \nabla \bm{u} - \nabla \bm{u}_{h} \VERT_{m,0}^{2},
\\
& \Vert \bm{B}^{m} - \bm{B}^{m}_{h} \Vert_{\ast}^{2}
= \Vert \bm{B}^{m} - \bm{B}^{m}_{h} \Vert^{2},
\\
& \Vert \bm{E}^{m} - \bm{E}^{m}_{h} \Vert_{\ast}^{2}
= \VERT \bm{E} - \bm{E}_{h} \VERT_{m,0}^{2}
+ k \VERT \nabla \times \bm{E} - \nabla \times \bm{E}_{h} \VERT_{m,0}^{2},
\\
& \Vert p^{m} - p^{m}_{h} \Vert^{2}_{\ast}
= \VERT p - p_{h} \VERT_{m,0}^{2}.
\end{align*}
We use $P_{2}-P_{1}$ to discretize the velocity and pressure pair, the lowest-order Ravi\'{a}rt-Thomas element to discretize the magnetic field, and the lowest-order N\'{e}delec edge element to discretize the electric field. Based on our analysis, the convergence order should be $1$. The results presented in Figure \ref{fig:picard_convergence} verify this fact.

\vskip-15pt
\begin{figure}[!ht]
\centering
\subfigure[Convergence versus mesh size $h$ ($k = 0.01$ and $t=0.08$)]{ \includegraphics[width=0.45\textwidth]{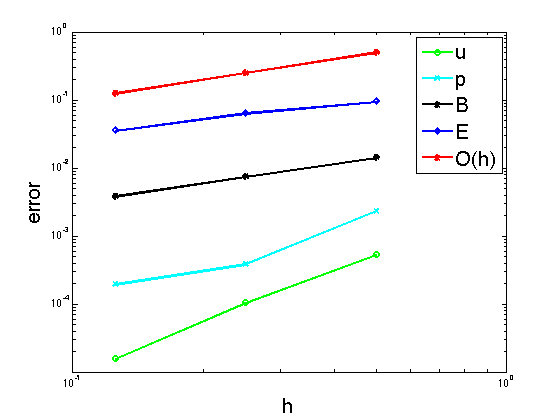} }
\qquad
\subfigure[Convergence versus time step size $k$ ($h = \nicefrac{1}{12}$ and $t=1$)]{ \includegraphics[width=0.45\textwidth]{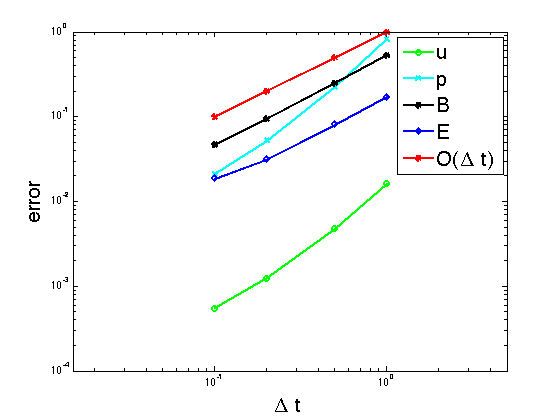} }
\qquad
\caption{Picard iteration: convergence test. }  \label{fig:picard_convergence}
\end{figure}

\section{Conclusions}
In this paper, we carry out error estimation of the structure-preserving discretization scheme proposed in \cite{Hu.K;Ma.Y;Xu.J.2014a}. These schemes achieves the optimal order of convergence. In addition, we confirm the theoretical analysis with numerical experiments.

\section*{Acknowledgements}

We would like to thank Shuonan Wu for his useful comments and suggestions.

\bibliographystyle{abbrv}
\bibliography{MHD}{}

\end{document}